\newcommand{\calF}{\mathcal{F}}
\newcommand{\calH}{\mathcal{H}}
\newcommand{\calL}{\mathcal{L}}
\newcommand{\calO}{\mathcal{O}}
\newcommand{\frakS}{\mathfrak{S}}
\newcommand{\frakA}{\mathfrak{A}}
\newcommand{\bbZ}{\mathbb{Z}}
\newcommand{\bbR}{\mathbb{R}}
\newcommand{\bbF}{\mathbb{F}}
\newcommand{\bbC}{\mathbb{C}}
\newcommand{\bbP}{\mathbb{P}}
\newcommand{\bbG}{\mathbb{G}}
\newcommand{\bbQ}{\mathbb{Q}}
\newcommand{\bfF}{\mathbf{F}}
\newcommand{\la}{\langle}
\newcommand{\ra}{\rangle}
\newcommand{\cha}{\textup{char}}
\newcommand{\SL}{\textrm{SL}}
\newcommand{\Aut}{\text{Aut}}
\newcommand{\GL}{\text{GL}}
\newcommand{\Spec}{\text{Spec}}
\newcommand{\adj}{\text{adj}}
\newcommand{\PSL}{\text{PSL}}
\newcommand{\SU}{\mathrm{SU}}
\newcommand{\PSU}{\mathrm{PSU}}
\newcommand{\da}{\dasharrow}
\newcommand{\Sp}{\mathrm{Sp}}
\newcommand{\PSp}{\mathrm{PSp}}
\newcommand{\PO}{\mathrm{PO}}
\newcommand{\half}{\frac{1}{2}}
\newcommand{\Cr}{\mathrm{Cr}}
\newcommand{\cre}{\mathrm{crd}}
\newcommand{\Bir}{\mathrm{Bir}}
\newcommand{\TP}{\mathrm{P}}
\newcommand{\Or}{\mathrm{O}}
\newcommand{\rc}{\mathrm{rcd}}
\newcommand{\ed}{\mathrm{ed}}
\newcommand{\rank}{\mathrm{rank}}
\newcommand{\PGL}{\mathrm{PGL}}
\newcommand{\beq}{\begin{equation}}
\newcommand{\eeq}{\end{equation}}
\newtheorem{theorem}{Theorem}[section]
\newtheorem{lemma}[theorem]{Lemma}
\newtheorem{proposition}[theorem]{Proposition}
\theoremstyle{definition}     
\newtheorem{example}[theorem]{Example}
\theoremstyle{remark}
\numberwithin{equation}{section}
\title{The essential and Cremona dimensions of a group}
\author{Igor Dolgachev}
\address{Department of Mathematics, University of Michigan, 525 East University Avenue, Ann Arbor, MI 48109-1109 USA}
\email{idolga@umich.edu}
\begin{document}
\begin{abstract}
The Cremona dimension of a group $G$ is the minimal $n$ such that $G$ is isomorphic to a subgroup of the Cremona group 
of birational transformations of an $n$-dimensional rational variety. In this survey article, we give many examples 
that give evidence to the conjecture that the Cremona dimension of a finite group over the field of complex 
numbers is less than or equal to 
the essential dimension 
of the group. 
\end{abstract}

\maketitle

\section{Introduction}
The notion of the essential dimension of a finite group $G$ was introduced by A. Buhler and Z. Reichstein in 
1997 \cite{Buhler}. Since then, it has become the subject of investigation in different fields of algebra 
and algebraic geometry. Roughly speaking, the essential dimension $\ed_k(G)$ of $G$ over a fixed field $k$ 
is the smallest number of parameters needed to define a generic equation over $k$ with the Galois group isomorphic to $G$. 
Viewing the Galois extension defined by the equation as a torsor of $G$, this notion can be extended 
to any algebraic group over $k$. 

In a talk at the Joint Meeting of the American and Chilean Mathematical Societies in Puc\`on in 2010, based on some 
examples in small dimension, the author conjectured that the Cremona dimension of a finite group over 
the field of complex numbers is less than or equal to the essential dimension of the group. In the present paper, 
we give a more systematic 
list of examples confirming this conjecture. In particular, we check the conjecture for some finite $p$-groups, for  
groups generated by pseudo-reflections, and some simple or almost simple groups. 
I use this opportunity to survey some known results on the essential dimension and subgroups of the Cremona group.

The paper is an extended version of my talk at the CIRM Conference on the Cremona group, held in Luminy in May 2025. 
I thank many participants of the conference 
for their helpful discussions. 

We work over an algebraically closed field $k$ of characteristic $p \ge 0$. We employ the ATLAS notations for 
finite groups (see \cite{Atlas}). 
For example, if $p$ is prime, $p^n$ denotes the elementary abelian group $(\bbZ/p\bbZ)^{\oplus n}$, 
the special linear groups $\SL_n(\bbF_q)$ over a finite field of cardinality $q = p^s$ are 
denoted by $\SL_n(q)$, and the usually simple groups $\PSL_n(\bbF_q)$ are denoted by $L_n(q)$. As it is customary, 
we denote the alternating and symmetric groups by $\frakA_n$ and $\frakS_n$. We denote the orthogonal 
group of $\bbF_q^n$ equipped with a nondegenerate quadratic form by $\Or^{\pm}(n,\bbF_q)$, where 
the two types differ by the value of the Witt index.  The Atlas 
notation for this group is $\mathrm{GO}^{\pm}_{2n}(q)$. The usually simple group obtained from the quotient of a subgroup 
of $\mathrm{SO}^{\pm}(n,\bbF_q)$ of index 1 or 2 by the center is denoted by $\Or^{\pm}_n(q)$. They coincide 
with the commutator subgroups of $\mathrm{SO}^{\pm}(2n,\bbF_q)$. 

The 
semi-direct product $A : B$ of two groups is denoted by $A : B$.

\vskip7pt
I thank Serge Cantat, Alex Duncan, and Zinovy Reichstein for their help in writing some parts of this paper.
I owe much to the referee for the substantial improvement in the paper's exposition.

\section{The Essential and Cremona dimensions: definitions} \label{S:2}
The origin of the notion of the essential dimension $\ed_k(G)$ of a finite group $G$ goes 
back to the classical notion of 
the Tschirnhaus transformation (Ehrenfried Walther von Tschirnhaus, 1659–1708). 
The formal definition was provided by Buhler and Reichstein in 1997 \cite{Buhler}.

Let $L/K$ be a finite separable extension of fields containing a fixed subfield $k$. The essential dimension 
$\ed_k(L/K)$ of $L/K$ is the smallest transcendence degree of a subfield $E/k$ of $K$ such that 
$L$ is generated by an element with the minimal polynomial  $g(t)\in E[t]$.  
 
Let $G$ be a finite group. The essential dimension $\ed_k(G)$ of $G$ is the essential dimension of the field 
extension
$k(V)/k(V)^G$, where $V$ is a linear faithful representation of $G$ over $k$. It can be shown that $\ed_k(G)$ 
does not depend on the choice of $V$.
It is easy to see that $\ed_k(G)$ is equal to the minimal dimension of a $G$-variety $X$ (we assume that the 
action of $G$ is \emph{birational and faithful}) such 
that there exists an equivariant rational map $V\dasharrow X$ (a \emph{compression}).

For example, we can take $K = k(a_1,\ldots,a_n)$, where 
$a_1,\ldots,a_n$ are algebraically independent  over a field $k$ of characteristic $0$, and consider 
$L = K[t]/(f(t))$ with minimal polynomial 
$f(t) = t^n+a_1t^{n-1}+\cdots+a_n$. We set
$d_k(n):= \ed_k(L/K)$. Since the Galois group of $f(t)$ is isomorphic to $\frakS_n$ which 
 acts linearly in $\overline{K}^n$ by permuting the roots, we see that 
$$d_k(n) = \ed_k(\frakS_n).$$
 By Tschirnhaus's transformation (also known as Tschirnhausen or Tschirnhaus transformation) that replaces $t$ 
 with $t+\frac{1}{n}a_1$, we get $d_\bbC(n)\le n-1$. It is easy to 
 see 
that $d_\bbC(3)=1$ and  $d_\bbC(4) = 2$. Hermite proved that $d_\bbC(5) \le 2$ and Klein proved that $d_\bbC(5) = 2$. 
Serre proved that 
$d_\bbC(6) = 3$ \cite[3.6]{Serre} and Duncan proved that $d_\bbC(7) = 4$ \cite{Duncan2}. It is known that, for $n\ge 5$, 
$$n-3\ge \ed_\bbC(\frakS_n) = d_k(n) \ge \Bigl[\frac{n}{2}\Bigr] $$
(see \cite{Buhler}). Moreover, the function $d_k(n)$ is not decreasing  and
\begin{eqnarray}\label{alternate}
\ed_k(\frakS_{n+2})\ge \ed_k(\frakS_n)+1
\end{eqnarray}
\cite[6.3]{Buhler}.  It is conjectured that $\ed_k(\frakS_n) = n-3$ if $\cha(k) = 0$ 
\cite{ReichsteinSym}. 

A compression 
$X$ of $V$ is obviously 
\emph{$G$-unirational}, i.e., it is the image of a dominant rational $G$-map  $\bbP^n\to X$. In particular, $X$ is a 
unirational; hence, it is
a  \emph{rationally connected} algebraic $G$-variety. 
We define the \emph{rational connected dimension} of $G$ to be
$$\rc_k(G) = \min\{\dim X: \textrm{$X$ is a rationally connected $G$-variety over $k$}\}.$$
Obviously, 
\begin{eqnarray}\label{eq:2.2}
\rc_k(G)\le \ed_k(G).
\end{eqnarray}
 
A rational variety is rationally connected. We can introduce the \emph{Cremona dimension}
 $$\cre_k(G) = \min\{n: \ G\  \textrm{embeds in $\Cr_k(n)$}\}.$$
Here, $\Cr_k(n)$ denotes the group of birational automorphisms of $\bbP_k^n$. In algebraic terms, 
$\Cr_k(n) = \Aut(k(t_1,\ldots,t_n))/k)$. Obviously,
$$\rc_k(G) \le \cre_k(G).$$
In a talk at the Joint Meeting of the American and Chilean Mathematical Societies in Puc\`on in 2010, I conjectured that
for $k = \bbC$, 
\begin{eqnarray}\label{myconjecture}
\cre_k(G)\le \ed_k(G).
\end{eqnarray}
Since I have no idea how to prove this conjecture, I suggest looking for a 
counterexample. However, it will be hard since many examples are discussed in this paper 
suggest that the conjecture could be true.

\vskip6pt
A finite group of essential dimension $0$ is trivial. A finite group $G$ with $\ed_\bbC(G) = 1$ is isomorphic to a 
cyclic group or a dihedral group $\mathfrak{D}_{4k+2}$ of order $2(2k+1)$. In fact, among all subgroups of $\PGL_2(\bbC)$, 
only these groups can be isomorphically lifted to a subgroup of $\GL_2(\bbC)$.

A group of essential dimension $2$ over a field of characteristic $0$ must be isomorphic to a subgroup of $\Cr_k(2)$. In the case $k  = \bbC$, groups of 
essential dimension $2$  were classified by 
Duncan \cite{Duncan}. There are only a few of them among the possible subgroups of $\Cr_k(2)$. They are 
subgroups of $\GL_2(\bbC), \ L_2(7)$,\ $\frakS_5$, or  $\mathbb{G}_m^2:H$, 
where $H =\mathfrak{D}_{2n}, n = 3,4,6$.

\section{Basic properties}\label{S:3}
It is obvious that 
\beq\label{obvious}
\cre_k(H) \le \cre_k(G)
\eeq
if $H$ is a subgroup of $G$, and 
\beq\label{product}
\cre_k(G_1\times G_2) \le \cre_k(G_1)+\cre_k(G_2).
\eeq
The same properties are true for the essential dimension \cite[Lemma 4.1]{Buhler}. 

Less well known is the behavior of the Cremona and essential dimensions under group extensions.
 If $Z = \bbZ/p\bbZ$ and $Z.G$ is a central cyclic extension of $G$, then it is known that 
 \begin{eqnarray}\label{center}
 \ed_k(G) = \ed_k(G/Z)+1
\end{eqnarray}
 if $Z\cap [G,G] = \{1\}$ and $p\ne \cha(k)$ \cite[Theorem 5.3]{Buhler}. 
 
 If $p = \cha(k)$, and 
 $$1\to G_1\to G \to G_2\to 1$$
 is an extension with an abelian $p$-group $G_1$, then 
\beq\label{vistoli}
\ed_k(G) \le \ed_k(G_1)+\ed_k(G_2)
\eeq
 (see \cite{Ledet}, \cite[Corollary 4.6]{Tossici}). 
 
 On the other hand, for any normal subgroup $N$ of $G$, we, obviously,  have 
 $$\cre_k(G) \ge \rc_k(G/N).$$

 One introduces the notion of the essential dimension $\ed_k(G,p)$ at a prime $p$ 
  (see \cite{Karpenko}). One of the equivalent definitions is $\ed_k(G,p): = \ed_k(\textrm{Syl}_p(G))$, where 
  $\textrm{Syl}_p(G)$ is any Sylow $p$-group of $G$. It follows that, for any $p$, 
  \beq\label{peddim}
  \ed_k(G)\ge \ed_k(G,p).
  \eeq 
    The computation of $\ed_k(G,p)$ is easier than the computation of 
  $\ed_k(G)$. In fact, if $\cha(k)\ne p$, the essential dimension of a finite $p$-group $P$ is equal to the 
  minimal dimension of a faithful linear 
  representation of $P$ \cite[Theorem 4.1]{Karpenko}. This implies  
  \beq\label{pgroups}
  \cre_k(P) \le \ed_k(P),
  \eeq
  because $P$ acts faithfully on the affine space of dimension $\ed_k(P)$. 
  
  In the case where $G$ is an abelian group of rank $n$ and the exponent of $G$ is coprime to $\cha(k)$, 
  \beq\label{abelian}
  \ed_k(G) = \rank(G)
  \eeq
  \cite[Theorem 5.1]{Buhler}.

  \section{Fixed points of an action of an abelian group}\label{S:4}
Suppose $\cha(k) = 0$ and $G$ is an abelian group. It follows from \eqref{eq:2.2} and \eqref{abelian} that $\rc_k(G) \le \rank(G)$. 
The following proposition is a nice result of Koll\`ar and Zhuang \cite{Kollar} that gives a lower bound for any 
abelian $p$-group $G$.

\beq\label{kollar}
\rc_k(G) \ge \frac{p-1}{p}\rank(G).
\eeq
This follows from the following proposition:

\begin{proposition} Assume $\cha(k) = 0$. Let $G$ be an abelian $p$-group of rank 
$r$ acting on a smooth connected projective variety $X$ of dimension $n$. Then,
$$r \le \nu_p(\chi(X,\calO_X))+\frac{p}{p-1} n.$$
\end{proposition}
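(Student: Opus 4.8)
The plan is to induct on $\dim X$, with a secondary induction on $\rank G$. We may assume the action faithful; replacing $G$ by its subgroup of elements of order dividing $p$ (same rank, still faithful) we may assume $G\cong(\bbZ/p)^r$. We may also assume $\chi(X,\calO_X)\ne 0$, since otherwise $\nu_p(\chi(X,\calO_X))=\infty$. If $r=0$ or $\dim X=0$ the statement is clear, and if $X^G\ne\emptyset$ then $T_xX$ (for $x\in X^G$) is a faithful $G$-module, so $r\le\dim X\le\nu_p(\chi(X,\calO_X))+\tfrac{p}{p-1}\dim X$; hence assume $X^G=\emptyset$.

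\emph{Case 1: some $g\in G$ of order $p$ acts freely.} Then $\pi\colon X\to\bar X:=X/\langle g\rangle$ is an \'etale $\bbZ/p$-cover, $\bar X$ is smooth connected projective of dimension $n:=\dim X$, $\bar G:=G/\langle g\rangle\cong(\bbZ/p)^{r-1}$ acts faithfully (an element acting trivially on $\bar X$ would, by connectedness of $X$, differ from a power of $g$ by one acting trivially on $X$), and $\pi_*\calO_X=\bigoplus_{i=0}^{p-1}L^{\otimes i}$ with $L^{\otimes p}\cong\calO_{\bar X}$. As $L$ is numerically trivial and $\cha(k)=0$, Hirzebruch--Riemann--Roch gives $\chi(\bar X,L^{\otimes i})=\chi(\bar X,\calO_{\bar X})$, so $\chi(X,\calO_X)=p\,\chi(\bar X,\calO_{\bar X})$ and $\nu_p(\chi(\bar X,\calO_{\bar X}))=\nu_p(\chi(X,\calO_X))-1$. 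The inductive hypothesis for $(\bar G,\bar X)$ finishes this case.

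\emph{Case 2: every nontrivial $g\in G$ has $X^g\ne\emptyset$.} Pick a maximal subgroup $H\le G$ with $X^H\ne\emptyset$; then $1\ne H\ne G$. Let $Y$ be a connected component of the (smooth) variety $X^H$, and $c:=\codim_X Y\ge 1$. Since a finite-order automorphism fixing a point $y$ and acting trivially on $T_yX=T_yY\oplus N_{Y/X}|_y$ is trivial near $y$, hence on $X$, the group $H$ — trivial on $Y$ — acts faithfully on $N_{Y/X}|_y$, so $\rank H\le c$. Let $S:=\mathrm{Stab}_G(Y)$, of index $p^{t}$ with $t=\rank(G/S)$; then $r=t+\rank H+\rank(S/H)$. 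If some $g\in S\setminus H$ fixed a point of $Y$ then $X^{\langle H,g\rangle}\ne\emptyset$ with $\langle H,g\rangle\supsetneq H$, contradicting maximality; hence $S/H$ acts \emph{freely} on the smooth connected projective variety $Y$ of dimension $n-c<n$. By induction,
\[
\rank(S/H)\ \le\ \nu_p\!\big(\chi(Y,\calO_Y)\big)+\tfrac{p}{p-1}(n-c),
\]
and combining with $\rank H\le c$ and $c-\tfrac{p}{p-1}c=-\tfrac{c}{p-1}$ gives
\[
r\ \le\ t+\nu_p\!\big(\chi(Y,\calO_Y)\big)+\tfrac{p}{p-1}n-\tfrac{c}{p-1}.
\]
The $G$-orbit $W:=G\!\cdot\!Y\subseteq X^H$ is a disjoint union of $p^{t}$ copies of $Y$, so $\nu_p(\chi(W,\calO_W))=t+\nu_p(\chi(Y,\calO_Y))$, and the proof is completed by the divisibility
\[
\nu_p\!\big(\chi(X,\calO_X)\big)\ \ge\ \nu_p\!\big(\chi(W,\calO_W)\big)-\tfrac{c}{p-1}.
\]

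This last divisibility is the crux, and the step I expect to be hard. I would prove it with the holomorphic Lefschetz (Atiyah--Bott) fixed point formula for $H$: expanding $\chi(X,\calO_X)=|H|\,\chi^H(X,\calO_X)-\sum_{h\in H\setminus 1}\chi^h(X,\calO_X)$ and each $\chi^h(X,\calO_X)=\sum_Z\int_Z\mathrm{td}(T_Z)\big/\prod_w(1-e^{-x_w}\zeta^{-c_w})$ over the components $Z$ of $X^h$ (with $\zeta$ a primitive $p$-th root of unity and $\zeta^{c_w}$ the eigenvalues of $h$ on $N_{Z/X}$), one obtains a congruence for $\chi(X,\calO_X)$. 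On the locus $X^H$ the relevant residue of a component $Z$ has leading term $\chi(Z,\calO_Z)/\prod_w(1-\zeta^{-c_w})$, of $p$-adic valuation $\nu_p(\chi(Z,\calO_Z))-\codim_X(Z)/(p-1)$, using $\nu_p(1-\zeta^{c})=1/(p-1)$ for $c\not\equiv 0\pmod p$ and the identity $\int_Z\mathrm{td}_{\dim Z}(T_Z)=\chi(Z,\calO_Z)$, and the $G$-translates of $Y$ contribute equally, accounting for the factor $p^{t}$. The work is to show that the remaining terms of these residues, together with the contributions of the strata fixed only by proper subgroups of $H$, do not lower the $p$-adic valuation below the stated bound — this requires controlling traces from $\bbQ(\zeta)$ via the $p$-adic size of the different of $\bbQ(\zeta)/\bbQ$. (Note that the divisibility, with $\chi(X,\calO_X)\ne 0$, forces $\chi(Y,\calO_Y)\ne 0$, so the inductive step used above is not vacuous.)
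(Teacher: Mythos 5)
Your reduction is clever and the bookkeeping in both cases checks out, but the proof has a genuine gap exactly where you flag it: the divisibility
$$\nu_p\bigl(\chi(X,\calO_X)\bigr)\ \ge\ \nu_p\bigl(\chi(W,\calO_W)\bigr)-\tfrac{c}{p-1}$$
is not proved, only a plan via the Atiyah--Bott holomorphic Lefschetz formula is sketched. This lemma carries essentially all of the content of the proposition: once it is granted, Case 2 closes in two lines, and its special case where $Y$ is a single point of smallest orbit (so $\chi(Y,\calO_Y)=1$, $t=c$ the orbit exponent) is precisely the key claim of the argument in the paper. You concede that the hard part --- showing that the residues along components of $X^h$ meeting $X^H$ only in higher codimension, and the strata fixed by proper subgroups of $H$, do not depress the $p$-adic valuation --- remains to be done, and it is not at all clear that the trace/different estimates you invoke will deliver a bound as sharp as $-c/(p-1)$ after summing over all of $H\setminus\{1\}$. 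As it stands the induction does not close.

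For comparison, the paper's proof (following Koll\'ar--Zhuang) avoids induction entirely: take a point with smallest orbit, of size $p^{c}$, and let $H$ be its stabilizer. Then $\rank(G)\le c+\rank(H)\le c+n$, because $H$ fixes a point and hence (in characteristic $0$) acts faithfully on the tangent space there; and $c\le \nu_p(\chi(X,\calO_X))+\bigl[\tfrac{n}{p-1}\bigr]$ because every $G$-invariant zero-cycle has degree divisible by $p^{c}$, while $\chi(X,\calO_X)=\mathrm{Td}_n(X)$ has denominator whose $p$-part is at most $p^{[n/(p-1)]}$ by Hirzebruch's formula. So the arithmetic input there is a global statement about degrees of invariant cycles plus the denominator of the Todd class, not a local fixed-point expansion. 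If you want to salvage your inductive scheme, you should either prove your divisibility lemma outright or replace it by this orbit-counting argument, which in effect makes the induction unnecessary.
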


\begin{proof} First, we claim that $G$ contains a subgroup $H$ of index $p^c$ such that 
$$c \le \nu_p(\chi(X,\calO_X))+\frac{n}{p-1}.$$
We skip the details of the proof. It is based on the equivariant theory of 
Chern classes. If $H$ is a subgroup such that $[G:H] = p^{c}$ is the smallest orbit of the action of $G$, 
then all $G$-invariant algebraic cycles are of degree divisible by $p^{c}$. It follows from Hirzebruch's formula 
that the largest $p$-power dividing the denominator $D$
of the Todd class $\mathrm{Td}_n(X)$ is equal to $[\frac{n}{p-1}]$. Thus,
$\chi(X,\calO_X) = \mathrm{Td}_n(X) = (D\mathrm{Td}_n(X))/D$ and  
$\nu_p(\chi(X,\calO_X))\ge c-[\frac{n}{p-1}]$.

This yields
$$r\le  \nu_p(\chi(X,\calO_X))+\frac{p}{p-1}n.$$
In fact, $H$  is an abelian subgroup that acts on $T_x(X)$. 
Since we assumed that $\cha(k) = 0$, it acts faithfully on $T_x(X)$. Hence, the rank of $H$ does not exceed $n$. So, 
$$r \le c+n\le \nu_p(\chi(X,\calO_X))+\frac{n}{p-1}+n \le  
\nu_p(\chi(X,\calO_X))+\frac{p}{p-1}n.$$
\end{proof}

The inequality \eqref{kollar} follows from the facts that $\chi(X,\calO_X) = 1$ for a rationally connected variety 
and that a birational action of a finite group on a complex algebraic variety $X$ can be isomorphically lifted 
to a biregular action on a nonsingular model of $X$. The proof depends on 
the existence of an equivariant resolution of singularities (true in dimension $2$ for any field).

The following example is taken from \cite{Prokhorov}.

\begin{example} Assume $k = \bbC$.
Let $G = p^r$ with $\rc_k(G)\le 3$. Then,
$$r \le \begin{cases}
      7& \text{if}\  p = 2, \\
      5& \text{if} \  p = 3, \\
      4& \text{if} \  p = 5,7,11,13 \\
      3& \text{if} \  p\ge  17 .
\end{cases}
$$
The inequality \eqref{kollar} improves Prokhorov's bound by one if $p\le 13$.
\end{example}

\begin{example} Assume $\cha(k)\ne 2$. We have 
$$\cre_k(2^{r}) \le \Bigl[\frac{r}{2}\Bigr]$$
since $2^{2s}\cong \mathfrak{D}_4^s$ acts faithfully on $(\bbP^1)^s$. The inequality \eqref{kollar}
shows that 
\begin{equation}\label{kollar2}
\cre_k(2^{r}) = \rc_k(2^{r}) < \ed_k(2^{r})  = r.
\end{equation}
\vskip5pt
\end{example}

\section{Extraspecial $p$-groups} \label{S:5}

 A finite $p$-group $G$ is called \emph{extraspecial} if its center $Z(G)$ is cyclic and coincides with the commutator 
 subgroup $[G,G]$. As a consequence, $Z(G)$ is of order $p$ and $\bar{G} = G/Z(G) \cong p^{2n}$ (see 
 \cite[Chapter 5, 5.5]{Gorenstein}).

An extraspecial $p$-group can be generated by  elements $x_1,\ldots,x_n,y_1,\ldots,y_n$ and a central 
element $c$ of order $p$ with  relations 
$$[x_i,y_j] = 1,\  [x_i,y_i] = c, \ c^p = 1, \ x_i^p\in \la c\ra,\  y_i^p \in \la c\ra.$$ 
If $p$ is odd, there are two isomorphism classes, one of exponent $p$ denoted by $p_+^{1+2n}$ and one of exponent 
$p^2$ denoted by $p_-^{1+2n}$. In the former case, $x_i^p =y_i^p = 1$. In the latter case,
we may take $x_1^p = c, x_i^p =1, i\ne 1, y_i^p = 1$.

Extraspecial $2$-groups have exponent $4$, however, there are still two isomorphism classes. 
We may take $x_1^2 = y_1^2 = c, x_i^p = y_i^p = 1, i\ne 1$, or 
$x_1^2 = y_i^2 = 1$ for all $i$. In the former case, the group is denoted by $2_+^{1+2n}$, and in the 
latter case it is denoted by 
$2_-^{1+2n}$. The groups differ by the isomorphism class of the quadratic form 
$q:G/Z(G)\cong \bbF_2^{2n}\to Z(G)\cong  \bbF_2$ induced by the map of $G$ defined by $ x\mapsto x^2$. This is a 
quadratic form on $G/Z$ associated with the 
symplectic form induced by the commutator $[x,y]$. It is even in the former case, and odd 
in the latter case.  

The splitting field $K$ of the group $G = p_\pm^{1+2}$ for linear representations over a field of characteristic $0$ 
is equal to $\bbQ(\omega)$, where $\omega$ is a primitive $p$-root (resp. $p^2$-root) of unity if 
$G\cong p_+^{1+2}$ (resp. $G\cong p_+^{1+2}$). It admits $p^2+p-1$ non-isomorphic irreducible linear representations 
over $K$. Among them 
are $p-1$ faithful representations of dimension $p$ and $p^2$ one-dimensional representations trivial on the center.
The faithful representations $\phi:G\to \GL_p(K)$ defined by sending 
$x$ to the cyclic permutation matrix, and sending $y$ to the diagonal matrix $[\omega^{i(p-1)},\ldots,\omega^i,1]$ if $G\cong p_+^{1+2}$
and $[\omega^{i(1+(p-1)p)},\ldots,\omega^{i(1+p)}, \omega^i], 1\le i\le p-1,$
if $G\cong p_-^{1+2}$ (see \cite[Chapter 5, 5.5]{Gorenstein}).

The subgroup $H$ generated by $x_1,y_1$ is an extraspecial group $p_{\pm}^{1+2}$ of order $p^3$. The group 
$p_+^{1+2}$ is isomorphic to the group of upper triangular unipotent $3\times 3$-matrices with entries in 
the finite field $\bbF_p$. The group $p_-^{1+2}$ is isomorphic to the semi-direct product 
$p^2 : p$, where a generator 
of the quotient group acts on a generator $x$ of the normal subgroup by $x\mapsto x^{1+p}$.

The centralizer subgroup $C_G(H)$ 
is isomorphic to $p_+^{1+2(n-1)}$. By induction, we obtain that $G$ is isomorphic to the central product 
of $n$ copies of 
extraspecial $p$-groups $p_+^{1+2}$ or the central product of one extraspecial group $p_-^{1+2}$ and $n-1$ copies 
of the extraspecial $p$-groups $p_+^{1+2(n-1)}$. In the case $p =2$, $2_+^{1+2}\cong \mathfrak{D}_8$ is the dihedral group of order $8$, and we obtain 
that $2_+^{1+2n}$ is isomorphic to the central product of $n$ copies of $\mathfrak{D}_8$. Also, 
$2_-^{1+2}\cong Q_8$, the quaternion group, and we obtain 
that $2_-^{1+2n}$ is isomorphic to the central product of 
$Q_8$ and $n-1$ copies of $\mathfrak{D}_8$.

An irreducible linear representation of a central product of groups $G_i$ with the same splitting field $K$ is isomorphic 
to the tensor product of irreducible representations of the groups $G_i$ over $K$ 
\cite[Chapter 3, Theorem 7.1]{Gorenstein}. This shows that the group $p_{\pm }^{1+2n}$ admits $p-1$ non-isomorphic 
irreducible linear representations of dimension $p^n$ and $p^{2n}$ one-dimensional representations.

Recall that  $G/Z(G)\cong \bbF_p^{2n}$ has a structure of a symplectic linear space over $\bbF_p$. 
The linear subspaces generated by the cosets $x_i$ and the cosets 
of $y_i$ form a pair of complementary maximal isotropic subspaces. It follows that 
the quotient of the normalizer $N(G)$ of the subgroup $G$ in $\SL_{2n}(p)$ modulo $G$ is a subgroup $H$ of $\Sp_{2n}(p)$.
This defines an extension 
$$1\to p_{\pm}^{1+2n}\to H(p,n) \to H\to 1.
$$
It is known that 
\beq\label{winter}
H = \begin{cases}
      \Sp_{2n}(p)& \text{if $G =  p_+^{1+2n}, p > 2$}, \\
      p_-^{1+2(n-1)}:\Sp_{2n-2}(p)&\  \text{if $G =  p_-^{1+2n}, p > 2$}, \\
      \Or^{\pm}(2n,\bbF_2)\  \text{if $G =  2_\pm^{1+2n}$}
\end{cases}
\eeq
 (see \cite{Winter}).

Suppose an extraspecial $p$-group $G = p^{1+2n}$ acts on a complex rational variety $X$ of dimension $n$. 
Then, the quotient $Y = X/Z(G) $ is unirational and, hence, rationally connected. The inequality \eqref{kollar} yields  
$$p^n\ge \cre_\bbC(G)\ge \rc_\bbC(p^{2n}) \ge \frac{p-1}{p}2n.$$

\vskip6pt
Following Zinovy Reichstein (a personal communication), we will prove the following:

\begin{proposition}\label{mainresult}
\beq\label{reich2}
\cre_\bbC(p_{\pm}^{1+2n}) \le pn.
\eeq
\end{proposition}

\begin{proof}
 As we stated before, the group
$G = p_{\pm}^{1+2n}$ is the central product of the $p$-extraspecial subgroups $G_i$ of order $p^3$. 
Recall that the central product $A\circ B$ of two groups with a fixed isomorphism 
$\theta:Z_1\to Z_2$ of central subgroups of $A$ and $B$ is the quotient of the direct product $A\times B$ by the subgrpup $Z_1\times Z_2$ of elements 
$(z_1,\theta(z_1)^{-1})$. More generally, let $Z_1,\ldots,Z_n$ be central subgroups of groups $A_1,\ldots,A_n$ with 
fixed isomorphisms $\theta_i:Z_i\to Z$ to some group $Z$. The central product 
$A_1\circ \cdots \circ A_n$ 
is the quotient of $A_1\times\cdots \times A_n$ by the  subgroup of $Z_1\times\cdots\times Z_n$ of elements of the form
$(z_1,\ldots,z_n)$, where $\theta_1(z_1)\cdots \theta_n(z_n) = 1$.

 In our case, $G$ is the central product $G_1\circ\cdots\circ G_n$ of extraspecial $p$-groups $G_i$
 order $p^3$, where we identify 
 the centers with a cyclic group 
 of order $p$ isomorphic to 
 $\mu_p \subset \bbC^*$. Thus, $G \cong (G_1\times \cdots G_n)/Z$, where 
 $Z$ is the subgroup of $\mu_p^n\subset (\bbC^*)^n$ of elements $(z_1,\ldots,z_n)$ with $z_1\cdots z_n = 1$. 
 The group $G$ acts faithfully 
 on the quotient $(\bbC^p)^n/Z$. Since the quotient of a linear action by an 
abelian group 
is known to be a rational variety, we conclude the proof.
\end{proof}
\vskip6pt

It follows from the result of Karpenko and Merkurjev, which we cited in Section \ref{S:3}, that 
$$\ed_\bbC(p^{1+2n}) = p^n.$$
So, if $n > 1$,  \eqref{mainresult} gives a strong inequality
$$\cre_k(p^{1+2n}) < \ed_\bbC(p^{1+2n}).$$
Note that, in our case, one cannot apply \eqref{center} 
 because the commutator subgroup of $p^{1+2g}$ coincides with its center.

\vskip6pt
The group $p_+^{1+2n}$ is known in the theory of abelian varieties as a subgroup $\mu_p.\bbF_p^{2n}$ of 
the \emph{Heisenberg group} $\calH_{p,n}$, a central extension $\bbC^*.\bbF_p^{2n}$.  We identify the linear 
space $V = \bbF_p^n$  with 
the group algebra $\bbF_p[P]$ of the subgroup $P$ generated by 
$x_1,\ldots,x_n$ modulo the center. Fix an isomorphism $e:Z(G) = \la c\ra\to \mu_p$ that sends $c$ to 
 a  primitive $p$-root $\omega$ of unity, and let $G$ act on complex valued functions $f(t), t\in P,$  by
$$(x_i\cdot f)(t)  = f(t+x_i), \quad (y_i\cdot f)(t) = e([y_i,t])f(t), \quad (c\cdot f)(t) = \omega f(t).$$
 The choice of an isomorphism $\bbZ/p\bbZ\to \mu_p$ defines $p-1$ non-isomorphic representations
of the extra-special group coming from the \emph{Schr\"odinger representation} of $\calH_{p,n}$, where $\bbC^*$ 
acts by scalar multiplication.

\begin{example} Assume $p = 3$ and $n = 1$. The Heisenberg group $3_+^{1+3}$ has a linear representation in 
$\bbC^3$. The normalizer group $N$ of $G$ in $\Sp_2(\bbC)\cong \SL_2(\bbC)$ is isomorphic to the extension 
\eqref{winter}
$$1\to 3_+^{1+3}\to H_{3,1} \to \SL_2(\bbF_3)\to 1.$$
The quotient group $H_{3,1}/Z(H_{3,1})$ is isomorphic to a subgroup $G_{216}$ of order $216$ of $\PGL_3(\bbC)$, 
classically known as the \emph{Hessian group}. 
The group $G_{216}$ leaves invariant the 
Hesse pencil of cubic curves. The subgroup $3^2 = G/Z(G)$ leaves invariant all members of the pencil, and  
the quotient group $\SL_2(3)$ acts on the pencil via the natural action of 
$\SL_2(3) \cong 2.\frakA_4$ in $\bbP^1$.
The group $3_+^{1+3}$ admits a faithful action on the triple cyclic cover of $\bbP^2$ branched over any member of 
the Hesse pencil.
This is a cyclic cubic surface, so $\cre_\bbC(3_+^{1+3}) = 2$ improving \eqref{mainresult} in this case.
\end{example}

\begin{example}
 Assume $n = 2$ and $p = 3$. The group $3_+^{1+4}$ acts faithfully and linearly in $\bbC^9$. 
 For any principally polarized abelian variety $(A,\calL)$ of dimension $2$, the linear system 
 $|\calL^{\otimes 3}|$ embeds $A$ in $\bbP(\Gamma(A,\calL^{\otimes 3}))\cong \bbP^8$. The group of $3$-torsion 
 points $A[3]\cong 3^4$ acts on $A$ but does not lift to an action on $\Gamma(A,\calL^{\otimes 3})^\vee$. However, 
 its central 
 extension $3_+^{1+4}$ lifts to a linear action on this space and realizes the Schr\"odinger representation in $\bbC^9$.
 It was proved by Arthur Coble that the image of $A$ in $\bbP^8$ is equal to the singular locus of a $G$-invariant cubic 
 hypersurface, 
 called the \emph{Coble cubic}. This remarkable fact has been extended to all $n = \dim A \ge 2$ 
 and $p = 3$ in \cite{BeauvilleCoble}. 
 
 The group $G$ acts faithfully on 
 the affine cone 
 over the Coble cubic in $\bbC^{3^n}$. Since a singular cubic hypersurface is rational, we obtain 
 that $\cre_\bbC(3_+^{1+2n})\le 3^{n}-1$. Reichstein's inequality \eqref{reich2} improves substantially on this inequality.
 
 The group $\Sp_{2n}(3) = H_{3,n}/3_+^{1+2n}$ acts in $V: = \Gamma(A,\calL^{\otimes 3})) \cong \bbC^{3^n}$.
 The corresponding linear representation is not irreducible, but decomposes into the direct sum of two linear  subspaces 
 $V^+$ and $V^-$ of dimensions $\half (3^{n}+1)$ and $\half (3^n-1)$, respectively. 
 They are eigensubspaces of the central involution of $\Sp_{2n}(3)$. The action of $\Sp_{2n}(3)$ (resp. $\PSp_{2n}(3)$) in 
 $V^-$ (resp. $V^+$) is the faithful linear representation of the smallest dimension.

 In the special case  $n = 2$, the group $\PSp_4(3)$ is a simple group isomorphic to the index 2 subgroup $W(E_6)'$ 
 of the Weyl group $W(E_6)$ of the root lattice of type $E_6$.
 In its projective representation in $\bbP(V^+)\cong \bbP^4$, it leaves invariant a rational quartic hypersurface. 
 It is, classically, known as the 
 \emph{B\"urhardt quartic}. It is distinguished from other quartic hypersurfaces in $\bbP^4$ by the 
 property that it admits 
 a maximal possible number of isolated singular points 
 (equal to $45$). This shows that 
 \beq
 \cre_\bbC(\PSp_4(3)) = 3,\quad \cre_k(\Sp_4(3)) =  4.
 \eeq 
 (the classification of finite subgroups of $\Cr_C(2)$ shows that $\cre_\bbC(\PSp_4(3)) > 2$, and, since $\Sp_4(3)$ is quasisimple,  we will see later that 
 $\rc_k(\Sp_4(3)) > 3$).
 
 We will also see later that $\ed_\bbC(\PSp_4(3)) \ge  4$, and since $\PSp_4(3)$ acts faithfully on 
 $V^+ \cong \bbC^5$, we obtain $\ed_\bbC(\PSp_4(3))\le 5$. In any case, 
 $$\cre_k(\PSp_4(3)) \le \ed_\bbC(\PSp_4(3)).$$
 
 It is known that, for any prime $p> 2$,
 $\ed_\bbC(\Sp_{2n}(p^r),p)\ge rp^{n-1}$ \cite{Knight}. In particular, since we know that $\Sp_4(3)$ acts faithfully 
 on $V^-\cong \bbC^4$, we obtain 
 $4\ge \ed_\bbC(\Sp_4(3)) \ge 3$. Therefore, if $\ed_\bbC(\Sp_4(3)) = 3$, we get a counter-example to our conjecture.
 Note that the group $\Sp_4(3)$ is a perfect group, so we cannot apply \eqref{center} to obtain that 
 $\ed_\bbC(\Sp_4(3)) = 4$.
 
The group $2\times \PSp_4(2)$ acts 
in $V^+ \cong \bbC^5$ as a group generated by complex pseudo-reflections. It is the group  
 No. 33 in the Shephard-Todd list of irreducible complex pseudo-reflection groups.

 The group $\Sp_4(3)$ acts in $V^-\cong \bbC^4$ and defines a finite subgroup of $\Aut(\bbP^-) \cong \PGL_4(\bbC)$.
 The group $3\times \Sp_4(3)$ acts in $V_2(3)^-$ as a group generated by complex pseudo-reflections. 
 It is group No. 32 in the Shephard-Todd list. It was first discovered by H. Maschke and, classically known, as the 
Maschke group.

\end{example}

A curious remark is that $G = 2_+^{1+24}$ is a subgroup of the Monster Group $\mathbf{M}$, which occurs
 as the centralizer of an involution. The quotient $N_\mathbf{M}(G)/G$ is isomorphic to the subgroup of 
 $\Or_{24}^+(2)$ isomorphic to the group $2.\textrm{Co}_1$, where $\textrm{Co}_1$ is a simple sporadic group, 
 the first Conway simple group.
Thus, $\ed_k(\mathbf{M}) \ge \ed_k(2_+^{1+24}) = 2^{12}$. It is known that the minimal 
dimension of a faithful linear 
representation 
of $\mathbf{M}$ is equal to $196383$. Therefore, $2^{12} \le \ed_k(\mathbf{M})\le 196383$. 

\section{Pseudo-reflection groups} \label{S:6}
Recall that a non-trivial linear transformation of finite order in a linear space $V$ over a field $K$ 
is called a \emph{pseudo-reflection} if 
it is the identity on some hyperplane in $V$. A pseudo-reflection group is a finite group $G$ generated by a 
finite set 
of pseudo-reflections.
It is a finite reflection group if it can be generated by a finite set of reflections, i.e., pseudo-reflections of order $2$. 
The dimension of 
the linear space $V$ is called 
the \emph{degree} of $G$ and is denoted by $d(G)$. If $K$ is algebraically closed and the order of $G$ is not divisible
by $\cha(K)$, the algebra of invariant polynomials $K[V]^G$ is freely generated by homogeneous polynomials of degrees 
$2\le d_1\le \cdots\le d_n$, where $n = \dim V$. The product $d_1\cdots d_n$ is equal to the order of $G$. We will 
assume that $G$ is a \emph{tame} pseudo-reflection group, i.e.,
its order is prime to $\cha(k)$.

If $k$ is algebraically closed, which we continue to assume,  a pseudo-reflection $g$ has $1$ as its 
eigenvalue of multiplicity $\dim V-1$ and one 
simple eigenvalue equal to a primitive $n$-root of unity, where $n$ is the order of $g$.

If $\cha(k) = 0$, the list of isomorphism classes of irreducible (i.e., not isomorphic to the product of 
pseudo-reflection groups) 
pseudo-reflection groups coincide with the Shephard-Todd list of complex unitary reflection groups. It consists of three 
infinite families of groups $\bbZ/n\bbZ, \  \frakS_n, \ G(m,s,n)$ and $34$ isolated cases, which are numbered 
by $4,\ldots, 37$.

 We already discussed the symmetric groups in Section \ref{S:2}. We recall that 
 $n-2\ge \ed_\bbC(\frakS_n)\ge 4$ if $n \ge 7$. The same result is known for $\rc_\bbC(\frakS_n)$ 
 \cite{ProkhorovSym}. The Cremona dimension $\cre_k(\frakS_n)$ is not known. It is easy to see that 
 $\cre_k(\frakS_n)\le n-3$, for example, considering its action on $\TP_1^n:= (\bbP^1)^n/\PGL(2)$.

The groups 
$G(m,s,n)$ are imprimitive groups isomorphic to a semi-direct product $A(m,s,n):\frakS_n$, where 
$A(m,s,n)$ is a subgroup of $(\bbZ/m\bbZ)^n$ of index $s$. The group $G(m,s,n)$ is 
generated by permutation matrices and diagonal matrices 
with $m$-th roots of unity on the diagonal and the determinant $d$ satisfying $d^{\frac{m}{s}} = 1$.
 Among these groups are some Coxeter reflection groups 
\[
\begin{split}
&G(1,1,n) = W(A_n),\quad  G(2,1,n) = W(B_n)\cong W(C_n),\\
& G(2,2,n) =  W(D_n),\quad  G(m,m,2) =  \textrm{I}_2(m).
\end{split}
\]
The groups Nos. 4--22 act in a $2$-dimensional linear space $V$.

\vskip5pt

The essential dimension of a tame pseudo-reflection group was studied by 
A. Duncan and Z. Reichstein \cite{DuncanReichstein}. They proved the following:

\begin{proposition}\label{az} Let $G$ be a tame irreducible pseudo-reflection group different from 
$\frakS_{n+1}\cong G(1,1,n)$. Then, 
$\ed_k(G(m,m,n)) = n-1$ if $(m,n)=1$, $\ed_k(W(E_6)) = 4$, and for all other groups $\ed_k(G) = d(G)$.
\end{proposition}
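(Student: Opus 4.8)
The plan is to establish the three assertions separately, in increasing order of difficulty, using the standard dictionary between essential dimension of a tame pseudo-reflection group $G\subset \GL(V)$ and the existence of $G$-compressions of $V$.

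First I would treat the case $\ed_k(G(m,m,n)) = n-1$ when $(m,n)=1$. The upper bound $\ed_k(G(m,m,n))\le n-1$ comes from exhibiting an explicit compression. Writing $G(m,m,n) = A(m,m,n):\frakS_n$ with $A(m,m,n)$ the index-$m$ subgroup of $(\bbZ/m\bbZ)^n$ consisting of tuples with product of entries trivial, the natural representation $V = k^n$ is reducible only insofar as it contains no trivial summand (since the $\frakS_n$-part has the standard representation); but the diagonal action of $\bbG_m$ commuting with the diagonal matrices lets one pass to the quotient $(\bbP^1)^{n}$-type construction, or more cleanly: $V\setminus\{0\}\to \bbP(V)$ is not $G$-equivariant onto a faithful quotient unless the scalars in $G$ are trivial, which under $(m,n)=1$ they are, because the only scalar matrix in $G(m,m,n)$ with determinant-type constraint is the identity when $\gcd(m,n)=1$. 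Hence $G(m,m,n)$ embeds in $\PGL_n = \Aut(\bbP^{n-1})$ acting faithfully, giving $\cre_k\le n-1$ and, after checking the $G$-map $V\dashrightarrow \bbP^{n-1}$ is a compression, $\ed_k\le n-1$. For the matching lower bound $\ed_k(G(m,m,n))\ge n-1$ one uses that $G(m,m,n)$ contains a subgroup isomorphic to $(\bbZ/m'\bbZ)^{n-1}$ for a suitable prime $m'\mid m$ (take an elementary abelian $p$-subgroup of $A(m,m,n)$ of rank $n-1$), and invoke the formula $\ed_k(H) = \rank(H)$ from \eqref{abelian} together with monotonicity \eqref{obvious}-type inequality for essential dimension; alternatively when $m$ is not a prime power one reduces to the largest prime $p\mid m$ and uses \eqref{peddim}. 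The key input is that the abelian $p$-subgroup of rank $n-1$ really sits inside $G(m,m,n)$, which follows from $(m,n)=1$ ensuring the product-one condition does not collapse the rank.

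Next, the other groups $G\ne \frakS_n, G(m,m,n), W(E_6)$: here $\ed_k(G) = d(G)$. The upper bound $\ed_k(G)\le d(G) = \dim V$ is immediate since $G$ acts faithfully and linearly on the affine space $V$. For the lower bound $\ed_k(G)\ge d(G)$, the strategy is: since $G$ is not of the form $G(m,m,n)$, it contains the scalar matrix $\zeta\cdot\id$ for some primitive $\ell$-th root of unity $\ell>1$ (equivalently the center of $G$ is nontrivial and acts by scalars), so any $G$-compression $V\dashrightarrow X$ must see this central scalar acting faithfully. One then appeals to the theorem of Duncan-Reichstein \cite{DuncanReichstein}: for a tame irreducible pseudo-reflection group containing scalars, the linear representation $V$ is incompressible. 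The engine behind their proof is that the quotient $V/G = \Spec k[V]^G = \mathbb{A}^n$ together with the discriminant/ramification divisor furnishes an obstruction — concretely, one uses the fact that the top Chern class of the normal bundle, or the non-triviality of the class of the generic torsor in degree $n$ cohomology, cannot drop, exactly as in the $p$-group argument behind \eqref{pgroups} when $\ell$ is prime, and for general $\ell$ one passes to a Sylow and uses \eqref{peddim}. I would present this as: reduce to $\ell = p$ prime via \eqref{peddim}, find a subgroup $P\le G$ generated by the order-$p$ scalar together with pseudo-reflections that is a $p$-group acting faithfully and irreducibly with no smaller faithful representation, and conclude $\ed_k(G)\ge \ed_k(P) = \dim V$ by \cite[Theorem 4.1]{Karpenko}.

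Finally, the exceptional value $\ed_k(W(E_6)) = 4$ while $d(W(E_6)) = 6$. The point is that $W(E_6)$, although acting irreducibly in dimension $6$, does \emph{not} contain $-\id$ (the longest element of $W(E_6)$ acts nontrivially on the $E_6$ diagram, so $-\id\notin W(E_6)$), hence its center is trivial and the reflection representation descends to a faithful action on $\bbP^5$; but one can do better. The upper bound $\ed_k(W(E_6))\le 4$ uses the classical fact that $W(E_6)\cong \mathrm{PSp}_4(3):2 \cong \GO_5(3)$ acts on the cubic surface configuration — concretely $W(E_6)$ acts faithfully and birationally on a $4$-dimensional rational variety (e.g. the moduli of marked cubic surfaces, or via the Burkhardt quartic picture from the earlier example where $\PSp_4(3)$ sits in $\Cr_\bbC(3)$ with the order-$2$ extension adding one more dimension), giving a compression $V\dashrightarrow X$ with $\dim X = 4$. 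The lower bound $\ed_k(W(E_6))\ge 4$ follows from \eqref{peddim}: $W(E_6)$ contains $\Sp_4(3)$-related subgroups, and more simply it contains the elementary abelian group $3^3$ (inside the $\PSp_4(3)$ part there is a rank-$3$ elementary abelian $3$-subgroup, or use a $2^4$ subgroup), so $\ed_k(W(E_6))\ge \max(\ed_k(3^3), \dots)$; pinning down exactly $4$ rather than $3$ requires the sharper computation of $\ed(W(E_6),2)$ or $\ed(W(E_6),3)$ from \cite{DuncanReichstein}.

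The main obstacle is the lower bound in the generic case $\ed_k(G) = d(G)$: showing the linear representation is truly incompressible requires the full force of the cohomological-invariant / Chern-class obstruction of Duncan–Reichstein and is not something one recovers from the elementary inequalities \eqref{obvious}--\eqref{pgroups} alone except when $G$ contains a large enough abelian $p$-subgroup. I would be explicit that this step is quoted from \cite{DuncanReichstein}, and that the role of the hypotheses ``$G\ne G(m,m,n)$'' and ``$G\ne W(E_6)$'' is precisely to guarantee $G$ contains scalars of the right order, which is what makes the representation incompressible.
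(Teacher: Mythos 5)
The paper does not actually prove this proposition: it is quoted wholesale from Duncan--Reichstein \cite{DuncanReichstein}, and the only case the paper elaborates is $W(E_6)$. Where your proposal overlaps with what the paper does say, it is consistent: the lower bounds via abelian $p$-subgroups and the formula $\ed_k(G,p)=|\{i: p\mid d_i\}|$ (equivalently, your ``$G$ contains a scalar of prime order $p$'' is the statement that $p$ divides $\gcd(d_1,\ldots,d_n)$, since the center of an irreducible pseudo-reflection group has order $\gcd(d_i)$), and the trivial upper bound $\ed_k(G)\le \dim V$, are exactly the paper's route for the generic case. Your self-contained treatment of $G(m,m,n)$ with $(m,n)=1$ (upper bound by passing to $\bbP(V)$ once there are no scalars, lower bound from $(\bbZ/p\bbZ)^{n-1}\subset A(m,m,n)$ and \eqref{abelian}) is correct and is more than the paper offers for that case; the only slip is that $(m,n)=1$ is needed for the upper bound, not, as you say, to guarantee the rank-$(n-1)$ abelian subgroup, which exists for every $m$.

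The genuine gap is in your upper bound $\ed_k(W(E_6))\le 4$. You pass from ``$W(E_6)$ acts faithfully and birationally on a $4$-dimensional rational variety $X$'' to ``giving a compression $V\da X$.'' That inference is false in general: a faithful action on a $4$-fold only gives $\cre_k(W(E_6))\le 4$, and the whole point of this paper is that $\cre_k\le\ed_k$ is an open conjecture, not a tautology. To bound $\ed_k$ you must exhibit a dominant $W(E_6)$-equivariant rational map from the $6$-dimensional reflection representation onto $X$, and this is precisely the nontrivial content the paper supplies: placing six ordered points on a cuspidal cubic identifies an open subset of the reflection representation $\mathbb{A}_k^6$ with configurations of points, yielding an equivariant dominant map $\mathbb{A}_k^6\da \TP_2^6=(\bbP^2)^6/\PGL_k(3)$ onto the $4$-dimensional Coble quotient. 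Your alternative suggestion (the Burkhardt picture ``with the order-$2$ extension adding one more dimension'') does not produce such a map either. Without the cuspidal-cubic construction or an equivalent versality argument, your proof of $\ed_k(W(E_6))\le 4$ does not go through; the lower bound $\ed_k(W(E_6))\ge 4$, on the other hand, is fine, and your parenthetical observation that a subgroup $2^4\subset W(D_5)\subset W(E_6)$ already forces it via \eqref{abelian} is a clean way to avoid invoking $\ed_k(W(E_6),2)=4$.
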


In particular, for all pseudo-reflection groups with $\ed_k(G) = d(G)$, we have 
$$\cre_k(G)\le \ed_k(G).$$

\vskip5pt
Let us explain the equality 

$$\ed_k(W(E_6)) = 4,$$
 which the authors attribute to me.

Let $W(E_n)$ be the Weyl group of a root system  of type $E_n$ defined by the Dynkin diagram of 
type $T_{2,3,n-3}$, and let 
\begin{eqnarray}\label{coble}
\rho_n: W(E_n)\to \Cr_k(2n-8)
\end{eqnarray}
be the Coble representation defined by the birational action of $W(E_n)$ on the rational quotient 
 $\TP_2^n:= (\bbP^2)^n/\textrm{PGL}_k(3)$ \cite{DO}.
In the case $n = 6$, the Coble representation is faithful, and we get
\beq
\cre_\bbC(W(E_6)) \le 4.
\eeq
 By putting six ordered points in $\bbP^2$ on an irreducible cuspidal cubic, one can define an equivariant rational map 
 $\mathbb{A}_k^6\da \TP_2^6$ that proves that $\ed_k(W(E_6))\le 4$. 
 
 It is proven in \cite{DuncanReichstein} that, for any tame pseudo-reflection group $G$, 
 $$\ed_k(G,p) = |\{i:p\vert d_i\}|.$$ 
 It follows from the classification that $p = 2$ or $3$ divides all $d_i$'s except in the case $G = W(E_6)$. 
 In the latter case, 
 $2$ divides four $d_i$. This shows that 
 $$4\ge \ed_k(W(E_6)) \ge \ed_k(W(E_6),2) = 4 = \cre_k(W(E_6)).$$ 
 
 \vskip6pt
 Let us compare the essential and Cremona dimensions of the pseudo-reflection groups. 
 
 We will use the following obvious lemma repeatedly.
 
 \begin{lemma}\label{obvious} Suppose a finite group $G$ acts linearly and faithfully on a linear space $V$. 
 Suppose it leaves invariant 
 a homogeneous polynomial $F$ such that the affine variety $V(F)$ is rational. Then, 
 $$\cre_k(G)\le \dim V-1,$$
 and the inequality is strict if the action descends to a faithful action on the projective space $\bbP(V)$.
 \end{lemma}

 A finite reflection group always admits a quadratic invariant defined by the symmetric Coxeter matrix (see 
 \cite[\S 2.3]{Humphreys}). It follows from the classification that the converse is also true: a pseudo-reflection group admiiting a quadratic 
 invariant is a reflection group. Applying the lemma and Proposition \ref{az}, we obtain 
 that, for any reflection group $G$, 
 not isomorphic to $\frakS_{n+1}$ or $W(E_6)$,
 $$\cre_k(G) \le \ed_k(G)-2\ \text{if $Z(G) = \{1\}$} $$
 and 
 $$\cre_k(G) \le \ed_k(G)-1\ \text{if $Z(G) \ne \{1\}$}.$$
The first case occurs only if $G$ is of type $D_{2t+1}$ or $\textrm{I}_{2t+1}$. 
In the latter case, $d(G) = 2$, so $\cre_k(G) = 1$ since the projective quadric left invariant by $G$ is the union
 of two points.

If $s \ne m$, $\ed_k(G(m,s,n)) = n$ and  $\ed_k(G(m,m,n)) = n-1$. In the former case, the essential dimension coincides with 
the smallest dimension of a faithful linear representation. So, $\cre_k(G(m,s,n))\le \ed_k(G(m,s,n))$. If $\frac{m}{s} \mid n$, the group
$G(m,s,n)$ leaves invariant the homogeneous polynomial 
$F = y^n+x_1\cdots x_n$ and acts faithfully on the corresponding affine variety. The permutation matrices act by permuting 
the unknowns $x_i$. Diagonal matrices multiply the variables $x_i$ and multiply $y$ by the determinant of the matrix.
The action descends to the action on the projective variety $X= V(F)\subset \bbP^n$ of dimension $n-1$ and degree $n$. Since the point 
$(0:\ldots:1)$ is a singular point of multiplicity $n-1$, the variety $X$ is rational. This gives 
$\cre_k(G(m,s,n))\le n-1$. So, if $s\ne m$, we get a strict inequality.

  \vskip6pt
 Using the classification of groups with $\cre_k(G)\le 2$ or $\ed_k(G)\le 2$, we obtain 
 that $\cre_k(G) = \ed_k(G)$ if $d(G) = 2,3$, unless $G$ is a reflection group of type $H_3$  (No. 23) isomorphic to 
 $2\times \frakA_5$. In this case,    $\ed_k(G) = 3$, but $\cre_k(G) = 2$ since the group acts faithfully on 
 $\bbP^1\times \bbP^1$.

 So, let us assume that $d(G) \ge 4$. They are the group numbers 28--37. 
 
  Group No. 28 is of degree $4$. It is the Coxeter reflection group of type $F_4$ of order 
  $1152$ isomorphic to the semi-direct product
  $W(D_4) : \frakS_3$  (see, for example,
   \cite[page 45]{Humphreys}, where it is also mentioned the realization of the group 
   as the group of symmetries of the $24$-cell due to Coxeter \cite[page 149]{Coxeter}). 
  The quotient of $W(F_4)$ by the central involution is realized as the group of symmetries 
  of a pencil of desmic quartic surfaces \cite{DolgKondo}. As such, it acts faithfully on 
  the universal family of the pencil
  $X\subset \bbP^3\times \bbP^1$. The projection $X\to \bbP^3$ makes $X$ a rational variety and gives
  $$\cre_k(W(F_4)/(\pm 1)) = \cre_k(W(F_4) = 3.$$

 The group No. 29 is of degree 4 and order $2^5\cdot 5!$. It is isomorphic to the group 
 $4\times G'$, where $G' \cong 2^4:\frakA_5$ is the group of projective transformations of $\bbP^3$ 
 leaving invariant the zero loci of one of the six irreducible factors of the fundamental invariant of degree 24 of the group No. 32. 
The six invariants define six quartic surfaces in $\bbP^3$ known 
as the \emph{Maschke quartic surfaces}. The group $G'$ is isomorphic to the upward double extension of the 
Mathieu group $M_{20} \cong 2^4:\frakA_5$  (see \cite[Remark 1]{DolgIco}, and the references therein). It is isomorphic 
to the stabilizer of a point or a line in the Mathieu group $M_{21}\cong L_3(4)$. It is also isomorphic to the index 2 subgroup 
of the Weyl group $W(D_5)$. We know that $\cre_k(G) \le \ed_k(G) = 4$ 
but I do not know whether $\cre_k(G) = 3$.

Group No. 30 is of degree 4. Its order is $14,400 = 120^2$. It is isomorphic to the Coxeter reflection 
group of type $H_4$. 
 Since the group has a quadratic invariant, $\cre_k(G) = 3 < \ed_k(G) = 4$. The group can be realized as a
 maximal 
 subgroup of the Weyl group $W(E_8)$. It is isomorphic to the double central extension of 
 $(\frakA_5\times \frakA_5) : 2$ (see \cite[Table 4.3]{Conway}). Note that it arises from a maximal subgroup
 of the simple group 
 of $\mathrm{O}^+_{8}(2) \cong W(E_8)'/(\pm 1)$. It has the same order but isomorphic to a different extension 
 $(\frakA_5\times \frakA_5) : (2^2)$ (see \cite[page 86]{Atlas}).

 Group No. 31 is of degree 4, and its order is $11,520 = 2^5\cdot 6!$. Note that the order of $G$ is equal to the order of 
 the Weyl group $W(D_6)$, but it is not isomorphic to this group. It is isomorphic to the group 
 $N_{2,2} = 2_+^{1+4}:\frakS_6$. Its extraspecial $2$-subgroup acts in $\bbP^3$, leaving invariant Kummer quartic 
 surfaces, the images of Jacobian varieties $A$ of curves of genus 2 under the map of degree $4$ given by the linear system $|2\Theta|$. 
 Its general orbits are the sets of 16 nodes
 of Kummer surfaces. Each involution in this group has two fixed lines that give rise to the \emph{Klein configuration} 
 $(30_6,60_3)$ of $30$ lines and their 60 
 intersection points. The quotient $\bbP^3/\calH_{2,2}$ is isomorphic to the Castelnuovo-Richmond quartic hypersurface
 with its Cremona-Richmond symmetri configuration $(15_3)$ of $15$ double lines intersecting by 3 at 15 points 
 (see \cite[10.2.1]{CAG}). The group $G$ leaves invariant the Klein configuration, the 60 points are the vertices 
 of  15 tetrahedra 
 that are permuted by $\frakS_6$. We mentioned above that the projectivized group No. 31 contains the projectivized 
 group No. 29 as a subgroup of index $6$
 
We already encountered groups No. 32 and No. 33 in the previous section. These are the 
 Burhardt and Maschke groups, 
which act linearly in the linear spaces $V_2(3)^{\pm}$. The Burhardt group $G$ leaves invariant 
the affine cone over a rational quartic hypersurface, and hence 
$\cre_k(G) = 4 < \ed_k(G) = 5$. 

Group No. 34 is of degree 6. Its order is $108\cdot 9!$, and it is isomorphic to the 
extension $6.\PSU_4(3).2 = 3.\SU_4(3).2$ of the simple group 
 $\PSU_4(\bbF_9)$ (in the ATLAS notation, the latter group is $\mathrm{U}_4(3)$). It is also isomorphic to 
 $\Or_6^-(3)$ and a subgroup 
 of $\GL_6(\bbC)$ that leaves invariant a lattice of rank $6$ over the Eisenstein numbers $\bbZ[\omega]$.
I do not know whether $\cre_\bbC(G) < 6$. 

Group No. 35 is the Weyl group $W(E_6)$. We discussed this group earlier. 

Group No. 36 is the Weyl group $ W(E_7)$.  The Coble representation $\rho_7$ is not faithful. It is the identity 
on the center
 generated by $w_0$.
We have $W(E_7) \cong W(E_7)'\times \la w_0 \ra$, where $W(E_7)'$ is a simple subgroup of index $2$ isomorphic to 
$\Sp_6(2)$. It is known that $\ed_k(\Sp_6(2),2) = 6$ \cite{Knight}. Thus,
$\ed_k(\Sp_6(2))\ge 6.$ The minimal dimension of an irreducible linear representatrion of $\Sp_6(2)$ is equal to $7$.
Since the restriction of $\rho_7$ to $W(E_7)'$ is faithful,  we obtain that
$$\cre_k(\Sp_6(2))\le 6\le \ed_k(\Sp_6(2)) \le 7.$$

Finally, the group No. 38 is the Weyl group $W(E_8)$. Since $W(E_8)$ admits a quadratic invariant,   
$\cre_k(G) < \ed_k(G) = 8$. 
The kernel of the Coble representation $\rho_8$ is generated by the central involution $w_0$. 
The group $W(E_8)/\la w_0\ra$ is isomorphic to $\Or^+(8,\bbF_2)$. 
We already mentioned that it contains a simple subgroup $W(E_8)'$ of index $2$ 
isomorphic to $\Or_8^+(2)$.
 It shows that 
 $$\cre_k(\Or_8^+(2)) \le 8.$$
It is known that $\ed_k(\Or_8^+(2),2) = 16$ \cite{Knight}. Thus,
$$\cre_k(\Or_8^+(2)) \le 8 < 16 \le \ed_k(\Or_8^+(2)).$$
Note that the minimal dimension of an irreducible linear representation of $W(E_8)'$ is equal to $28$. 

The extraspecial $2$-group $2_+^{1+8}$ is contained in $\Or_8^+(2)$. Applying the Coble representation $\rho_8$, 
we obtain that, in any characteristic,
$\cre_k(2_+^{1+8})\le 8$ confirming \eqref{mainresult}.

\section{Simple and quasi-simple groups}

 It follows from the classification of finite subgroups of $\Cr_\bbC(2)$ that the only simple groups contained 
 in $\Cr_\bbC(2)$ are $ \mathfrak{A}_5,\  \mathfrak{A}_6$, and $ L_2(7)$.

 Simple groups $G$ with $\cre_\bbC(G) = 3$ were classified by Prokhorov \cite{ProkhorovSimple}.
 They are 
 $$\frakA_7,\quad  L_2(8), \quad  W(E_6)' \cong \PSp_4(3)\cong \PSU_4(2).$$
  Any non-trivial action of a simple group is faithful. 
 The group $\frakA_7$ admits a linear $6$-dimensional representation. It acts in $\bbP^5$, leaving invariant 
  a smooth quadric. We identify the quadric with the Grassmannian $G_1(\bbP^3)$ and use  
  that the group of automorphisms of $G_1(\bbP^3)$ is isomorphic to a double extension of $\Aut(\bbP^3)$. 
  This implies that $\frakA_7$ acts by automorphisms of $\bbP^3$. 
  
  The group $L_2(8)$ admits a 9-dimensional irreducible representation $V$ and leaves invariant a smooth 
  quadric. It acts on the $10$-dimensional variety $\textrm{LG}(4,9) \subset \bbP^{15}$ of maximal isotropic 
  subspaces in $V$. 
  The projective space $\bbP^{15}$ contains two invariant subspaces of dimension $5$ and $7$. The intersection 
  of these subspaces with $\textrm{LG}(4,9)$ is a canonical curve of genus $7$ with the group of automorphisms of 
  the maximal possible order 
  $6\times 84 = 504 = |L_2(8)|$ and a rational Fano threefold of genus $7$, respectively.

 Finite simple groups with $\ed_\bbC(G) = 3$  were classified by Beauville \cite{Beauville}. It turns out that there 
 is only one group $G = \frakA_6$ 
 and, possibly, the group $G = L_2(11)$ that acts faithfully on the smooth Klein cubic hypersurface in $\bbP^4$ given by 
 the equation 
 $$x_0^2x_1+x_1^2x_2+x_2^2x_3+x_3^2x_4+x_4^2x_0 = 0.$$
 The inclusion of the latter group is conditional on the validity of a 
 conjecture of Cassels 
 and Swinnerton-Dyer: a cubic hypersurface over a field $K$ has a rational $K$-point if and only if it has a 
 $K$-rational 
 $0$-cycle of degree one, or, equivalently, when it has a rational point of degree prime to $3$ \cite{Coray}. 
Since, by Prokhorobv, $\cre_\bbC(L_2(11))\ge 4$, our conjecture and the conjecture 
 of Cassels and Swinnerton-Dyer are incompatible (this was noted earlier in \cite[Proposition 10.8]{DR2}).

  \vskip5pt
  Not much is known about simple groups $\frakA_n, n \ge 8$. If $\cha(k) = 0$, it is known 
  that $\ed_k(\frakA_n)$ is a non-decreasing function of $n$ and 
  $$\ed_k(\frakA_{n+4})\ge \ed_k(\frakA_n)+2.$$
  Also, it is known that 
  $$\ed_k(\frakA_n)\ge \begin{cases}
      \frac{n}{2}& \text{if $n$ is even}, \\
      \frac{n\pm 1}{2}& \text{if $n\equiv \mp 1 \mod 4$}
\end{cases} 
$$
\cite{Duncan}.  In particular,  $\ed_\bbC(\frakA_8) \ge 4$.  
The character table for the group $\frakA_8$ shows that the group admits an invariant quadric in its irreducible 
linear representation of dimension $6$, and hence, in its faithful projective representation in $\bbP^5$. This gives
$$\cre_k(\frakA_8) = 4 \le \ed_k(\frakA_8).$$

\vskip5pt
A finite group $G$ is called \emph{quasi-simple} if it is perfect and the quotient of $G$ by the center is a simple group.
A quasi-simple group $G$ with $\rc_k(G) = 3$ belongs to the 
following list:
$$\SL_2(7),\ 3.\frakA_6,  \ 2.\frakA_6 \cong \SL_2(9), \ 6.\frakA_6$$
 \cite{Blanc}. It is not known whether the last two groups can be realized. 

The group $3.\frakA_6$ admits an irreducible $3$-dimensional representation, so 
$$\cre_\bbC(3.\frakA_6) = \ed_\bbC(3.\frakA_6) = 3.$$ 

The group $G =\SL_2(7)$ admits an irreducible 
linear representation of dimension $4$. It acts faithfully on the Fano varieties isomorphic to 
the double covers  
of $\bbP^3$ ramified over the surfaces given by invariants of $G$ of degree $4$ or $6$.
Neither of these varieties is rational.  So, $\rc_\bbC( \SL_2(7)) = 3$, but I do not know 
 whether 
$\cre_\bbC(\SL_2(7))$ is equal to $3$.  We have $3\le \ed_\bbC(\SL_2(7))\le 4$. So, it can be a potential 
counterexample to my conjecture.

  Note that the group $2\times (3.\frakA_6)$ is not isomorphic to the group $\not\cong 6.\frakA_6$.  
It is a pseudo-reflection group No 27 with $d(G) = 3$, so 
 $$\cre_\bbC(2\times (3.\frakA_6)) = \ed_\bbC(2\times (3.\frakA_6)) = 3.$$
It is known that $2.\frakA_6$ admits a faithful linear 
 representation of dimension $4$, so 
 $\cre_k(2.\frakA_6) \le 4$.

\section{Positive characteristic}
Not much is known about the essential dimension of finite groups of order divisible by $\cha(k)$ (\emph{wild groups}). 
In this section, we assume that this is the case. As we will see, my conjecture is unlikely to be true in this case.

Since $\PGL_{n+1}(\bbF_{p^r})\subset \PGL_k(n+1)$ if $\cha(k) = p$, we obtain 
that 
$$\cre_k(\PGL_{n+1}(\bbF_{p^r})) \le n.$$
In particular,  $\cre_k(L_2(p^r)) = 1$.  Also, any group $p^r$ is contained in $k$, so 
$$\cre_k(p^r) = 1.$$

The classification of finite wild subgroups of $\PGL_2(k)$ and 
$\PGL_3(k)$ is known (see the lists and the references  in \cite{DolgMartinOdd}). We find the following groups 
with $\cre_k(G) = 1$ besides mentioned above:
$$
\mathfrak{D}_{4pk} (p\ne2),   \quad \mathfrak{D}_{4n+2} (p = 2), \quad A:\mu_s,\  A\subset k, (s,p) = 1.
$$ 
Some new groups of Cremona dimension $2$ appear as groups of automorphisms 
of del Pezzo surfaces in positive characteristic (see \cite{DolgMartinOdd}, \cite{DolgMartinEven}).
For example, the group $W(E_6)'$ is the group of automorphisms of the Fermat cubic surface in characteristic $2$ and 
the group $\PSU_3(2)$ is a group of automorphisms of a del Pezzo surface of degree $2$ in characteristic $3$.

Even for the cyclic group $\bbZ/p^r\bbZ$, we do not know either its essential dimension nor 
its Cremona dimension. It follows from \eqref{vistoli} 
 $$\ed_k(\bbZ/p^r\bbZ) \le r.$$
 It is conjectured in loc. cit. that the equality takes place. 
 
 Since any finite $p$-group is a nilpotent group, a more general fact follows from \eqref{vistoli}: for any finite 
 wild $p$-group $P$ of order $p^n$,
 \beq
 \ed_k(P)\le n.
 \eeq

 Following Alex Duncan (a private communication), let us prove the following:
 
 \begin{proposition}
 \beq
 \cre_k(\bbZ/p^2\bbZ) = \ed_k(\bbZ/p^2\bbZ) =  2.
 \footnote{This corrects a mistake in \cite[Theorem 8]{Dolgachev}, where it is asserted that $\cre_k(\bbZ/p^2\bbZ) > 2$ if $p > 2$.}
 \eeq 
 \end{proposition}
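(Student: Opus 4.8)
The plan is to establish the three inequalities $\ed_k(\bbZ/p^2\bbZ)\le 2$, $\rc_k(\bbZ/p^2\bbZ)\ge 2$ and $\cre_k(\bbZ/p^2\bbZ)\le 2$, and then combine them with the unconditional inequalities $\rc_k(G)\le\ed_k(G)$ and $\rc_k(G)\le\cre_k(G)$. The first inequality is nothing new: applying the bound $\ed_k(U)\le n$ for a $p$-group $U$ of order $p^n$ (recalled above from \cite{Tossici}) to $U=\bbZ/p^2\bbZ$ gives $\ed_k(\bbZ/p^2\bbZ)\le 2$. I stress that the lower bound $\ed_k(\bbZ/p^2\bbZ)\ge 2$ will be deduced from $\rc_k(\bbZ/p^2\bbZ)\ge 2$ and \emph{not} from $\cre_k(\bbZ/p^2\bbZ)\ge 2$, since the implication $\cre_k\le\ed_k$ is precisely the conjecture under discussion and invoking it here would be circular.

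For the lower bound I would show that $\bbZ/p^2\bbZ$ admits no faithful action on a rationally connected curve. Such a curve is unirational, hence rational, so its smooth projective model is $\bbP^1$; since faithfulness of a birational group action passes to normalization, a faithful action would produce an embedding $\bbZ/p^2\bbZ\hookrightarrow\Aut(\bbP^1)=\PGL_2(k)$. But in characteristic $p$ every nontrivial element of $p$-power order in $\PGL_2(k)$ is unipotent (for commuting Jordan components the order of the product is the lcm of the two orders, and the semisimple component has order prime to $p$, hence trivial), and a nontrivial unipotent element of $\PGL_2(k)$ has order exactly $p$; so $\PGL_2(k)$ contains no element of order $p^2$. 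This contradiction gives $\rc_k(\bbZ/p^2\bbZ)\ge 2$, whence $\ed_k(\bbZ/p^2\bbZ)\ge 2$ and $\cre_k(\bbZ/p^2\bbZ)\ge 2$.

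It remains to exhibit an element of order $p^2$ in $\Cr_k(2)$. I would take the de Jonqui\`eres automorphism of $\mathbb{A}_k^2\subset\bbP_k^2$ given by $g\colon(x,y)\mapsto(x+1,\;y+x^{p-1})$, which manifestly lies in $\Cr_k(2)$. A straightforward induction gives $g^n\colon(x,y)\mapsto\bigl(x+n,\;y+\sum_{i=0}^{n-1}(x+i)^{p-1}\bigr)$, so $g^p\colon(x,y)\mapsto\bigl(x,\;y+\sum_{i\in\bbF_p}(x+i)^{p-1}\bigr)$. Expanding $(x+i)^{p-1}$ by the binomial theorem and using the classical identity $\sum_{i\in\bbF_p}i^m=-1$ when $(p-1)\mid m$ and $m>0$, while $\sum_{i\in\bbF_p}i^m=0$ otherwise, one finds $\sum_{i\in\bbF_p}(x+i)^{p-1}=-1$; hence $g^p\colon(x,y)\mapsto(x,y-1)$ has order $p$, and therefore $g$ has order $p^2$. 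Thus $\langle g\rangle\cong\bbZ/p^2\bbZ$ embeds in $\Cr_k(2)$, i.e.\ $\cre_k(\bbZ/p^2\bbZ)\le 2$. (Intrinsically, $g$ is translation by a generator in the action of the Witt group $W_2(\bbF_p)\cong\bbZ/p^2\bbZ$ on the variety $W_2\cong\mathbb{A}_k^2$, which gives the same construction without coordinates.) Combining the three displayed inequalities yields $\cre_k(\bbZ/p^2\bbZ)=\ed_k(\bbZ/p^2\bbZ)=2$.

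The only genuine computation in this plan is the finite-field identity $\sum_{i\in\bbF_p}(x+i)^{p-1}=-1$, and this is the single point where the argument is not completely formal; the remaining ingredients---rationally connected curves are $\bbP^1$, the $p$-elements of $\PGL_2(k)$ in characteristic $p$ are unipotent of order $p$, and triangular automorphisms lie in the Cremona group---are standard. The main thing to be careful about is not to short-circuit the lower bound through $\cre_k$, which is why it is routed through $\rc_k$.
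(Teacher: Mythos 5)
Your proposal is correct and follows essentially the same route as the paper: an explicit triangular (de Jonqui\`eres) map $g$ of the affine plane with $g^p$ a nontrivial translation, which is exactly the paper's action $(x,y,z)\mapsto(x+y,y,z-P(x,y))$ on $\bbP(1,1,p-1)$ read in affine coordinates, the only difference being that you take $x^{p-1}$ and verify $\sum_{i\in\bbF_p}(x+i)^{p-1}=-1$ by power sums where the paper takes $P(x,1)=x(x+1)\cdots(x+p-2)$ and uses Wilson's theorem plus degree interpolation. You also write out the lower bounds ($\rc_k\ge 2$ via the absence of order-$p^2$ elements in $\PGL_2(k)$, and $\ed_k\le 2$ from Tossici--Vistoli) that the paper leaves to the surrounding discussion, and you correctly avoid the circularity of deducing $\ed_k\ge 2$ from $\cre_k\ge 2$.
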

 
 \begin{proof}
 A generator $g$ of the group $G = \bbZ/p^2\bbZ$  acts regularly on the weighted homogeneous 
 plane $\bbP(1,1,p-1)$ and its minimal resolution $\bfF_2$ (considered as a smooth conic bundle over $\bbP^1$) 
 by the formula
\beq\label{oldaction}
g:(x,y,z)\mapsto (x+y,y, z-P(x,y)),
\eeq
 where $P(x,y) = x(x+y)\cdots (x+(p-2)y).$  We use that 
 $$\sum_{i=0}^{p-1}P(x+iy,y) = -y^{p-1}.$$
 This yields
 $$g^p:(x,y,z)\mapsto (x,y,z+y^{p-1}),$$
and implies that $g^p$ is of order $p$.
 \end{proof}
 
 In fact, as it was pointed out to me by Alex Duncan and 
 Gebhard Martin, one can use the translation action on the ring $W_{n}(k)$ of truncated Witt vectors of length $n$ to define 
 a faithful action of $\bbZ/p^n\bbZ$ on $\mathbb{A}_k^n\cong \Spec~W_n(k)$ proving
\beq\label{alexgebhard}
\cre_k(\bbZ/p^n\bbZ) \le n.
\eeq 
The translation action can be 
 extended to a smooth equivariant 
 compactification $\overline{W}_n(k)$ of $W_n(k)$   constructed inductively as a compactification 
 of a line bundle over $\overline{W}_{n-1}(k)$ starting from $\overline{W}_2(k) = 
 \bbP(\calO_{\bbP^1}\oplus \calO_{\bbP^1}(p))$ \cite{Garuti}.

 Recall that the ring $W_{n}(k)$ consits of vectors $a = (a_0,a_1,\ldots,a_{n-1})\in k^{n}$ with the addition 
 $a+b:= (S_0(a,b), S_1(a,b), \ldots, S_{n-1}(a,b)$, where the polynomials 
 $S_i(x,y)\in \bbZ[x_0,\ldots,x_i,y_0,\ldots,y_i]$
 are defined by the property 
 $$w_n(x_0,\ldots,x_{n})+w_n(y_0,\ldots,y_{n}) = 
 w_n(S_0(x_0,y_0),\ldots,S_{n}(x_0,\ldots,x_{n},y_0,\ldots,y_{n})	),$$
 where
 $$w_n(X_0,\ldots,X_{n}) = \sum_{i=0}^{n}p^iX_i^{p^{n}}.$$
 The multiplication map $[p]$ is given by $(a_0,\ldots,a_{n-1})\mapsto  (0,a_0^p,\ldots,a_{n-2}^p).$
  It follows that $[p]^{n}(W_{n}(k)) = \{0\}$. This implies that, if $a_0\ne 0$, the homomorphism  
 $\bbZ\to W_n(k), 1\mapsto a,$ defines an injective map 
 $$\iota_a: \bbZ/p^{n}\bbZ\to W_n(k),$$
 which is compatible with the multiplication by $p$ in the source and the target. Note that, taking 
 $a = (1,0,\ldots,0)$, we obtain the natural inclusion
 $$\bbZ/p^n\bbZ = W_n(\bbF_p) \subset W_n(k).$$
 
 This defines a faithful action of $\bbZ/p^{n}\bbZ$ on $W_n(k)\cong \mathbf{A}_k^{n}$ via the 
 translation by $a$, proving the inequality \eqref{alexgebhard}. 
 
 To get the explicit formula, we need to know the explicit formula for the polynomials $S_0,\ldots,S_{n-1}$. 
 Every exposition 
 of the theory of Witt vectors includes the formulas for $S_0$ and $S_1$:
 $$S_0(x_0,y_0) = x_0+y_0,\quad S_1(x_0,x_1,y_0,y_1) = x_1+y_1-\frac{(x_0+y_0)^p-x_0^p-x_1^p}{p}.$$
For example, taking $n = 2, p = 3, a = (1,0)$, we obtain that the action on the affine plane is defined by the formula
$(x,y) \mapsto (x+1, y-x^2-x).$
After homogenizing, we get the action on $\bbP(1,1,2)$ that coincides with the action from \eqref{oldaction}. 
Homogenizing differently,
we obtain the action $(x,y)\mapsto (x+t,y-x^2t-xt^2)$ on $\bbP(1,1,p)$. After we blow up the point $(0:0:1)$, we obtain 
the minimal ruled surface $\bbP(\calO_{\bbP^1}\oplus \calO_{\bbP^1}(3)) \cong \bfF_3$. This leads to the compactification from 
\cite{Garuti}

Let me give one more explicit example for an action of $\bbZ/p^3\bbZ$ in the case $p = 3$. Other examples
 with larger $n$ or larger $p$ use very large polynomials.

The general recursive formulas for the polynomials $S_i(x,y)$ were given by Matthieu Romagny \cite{Romagny}.
They give the following formula for $S_2$ in the case $p = 3$: 
\[
\begin{split}
&S_2(x_0,x_1,x_2,y_0,y_1,y_2) = \\
&-x_0^8y_0 - 4x_0^7y_0^2 - 9x_0^6y_0^3 - 13x_0^5y_0^4 - 13x_0^4y_0^5 - 9x_0^3y_0^6\\
& - 4x_0^2y_0^7 - x_0y_0^8 - x_0^4x_1y_0^2 - x_0^4y_0^2y_1 - 2x_0^3x_1y_0^2 - 2x_0^3y_0^2y_1\\
& + x_0^2x_1^2y_0 - x_0^2x_1y_0^2 + 2x_0^2x_1y_0y_1 - x_0^2y_0^2y_1 + x_0^2y_0y_1^2 + x_0x_1^2y_0\\
& + 2x_0x_1y_0y_1 + x_0y_0y_1^2 - x_1^2y_1 - x_1y_1^2 + x_2 + y_2.
\end{split}
 \]
 
 \vskip6pt
 Substituting $(y_0,y_1,y_2) = (1,0,0)$, we obtain the following action of $\bbZ/3^3\bbZ$ on $\mathbb{A}_k^3$:
 \[
 \begin{split}
  &(x,y,z) \mapsto (x+1,y-x^2-x,z-x-x^8 - 4x^7 - 9x^6 - 13x^5\\
  & -13x^4-9x^3-4x^2
- x^4y -2x^3y+ x^2y^2 - x^2y  + xy^2.
\end{split}
  \]
  We could homogenize the action in two ways. We can write 
  $$(t,x,y,z,t)\mapsto (t,x+t, y+f_2(x,t),z+f_8(x,y)),$$ 
  or 
 $$(t,x,y,z,t)\mapsto (t,x+t, y+f_3(x,t),z+f_9(x,y)),$$
 where the subscript denotes the degree of a homogeneous polynomial.
 The first projectivization gives the action on $\bbP(1,1,p-1,p^2-1)$, the second one defines the action 
 on $\bbP(1,1,p,p^2)$. The difference is the action on the boundary $V(t)$. In the first case, it 
 is not trivial, and in the second case, it is the trivial action. Lifted to $\overline{W}_n(k)$, the quotient by the group 
 action of $\bbZ/p^n\bbZ$ is a cyclic cover of $\overline{W}_n(k)$ ramified along the boundary 
 divisor \cite[Proposition 2]{Garuti}.

\vskip6pt
 A candidate for a counter-example to the inequality \eqref{myconjecture}  is the extraspecial $p$-group $p^{1+2n}$. 
 It follows from 
 \eqref{vistoli} that 
 $$\ed_k(p^{1+2n}) =  2.$$
 Although the Heisenberg group $p_+^{1+2}$ admits an embedding in $\Cr_k(2)$ via the action:
 $$\phi(x_1): (x,y,z) \mapsto (x+\alpha y,y+\beta  z,z),\quad \phi(y_1):(x,y,z) \mapsto (x+\gamma y,y+\delta  z,z),
$$
 where $\alpha\delta-\beta\gamma = 1$ (see \cite{Deserti}), I do not know how to embed any group $p^{1+2n}$ into $\Cr_k(2)$.
 
 The best I can show is that 
 $$\cre_k(p_+^{1+2n})\le 2n,$$
 where $p$ is odd. Fix an embedding $\bbF_p^{2n} =U\oplus V\hookrightarrow k$  with the standard symplectic form $\la x,y\ra$ for which each summand is a maximal isotropic subspace.  Consider the projective variety $X(F)\subset \bbP^{n}$, where
 $$F = x_0^p-x_0x_{2n+1}^{p-1}+\sum_{i=1}^nx_i(x_i^{p-1}+\alpha_i^{p-1}x_{2n+1}^{p-1})+\sum_{i=n+1}^{2n}x_i(x_i^{p-1}+\beta_i^{p-1}x_{2n+1}^{p-1}),$$
where $(\alpha_1,\ldots,\alpha_n)$ is a basis of $U$ and $(\beta_1,\ldots,\beta_n)$ is a basis of $V$. Consider the action of $p_+^{1+2n}$ on $X$ defined by 
\begin{eqnarray*}
c&:&(x_0,x_1,\ldots,x_{2n+1})\mapsto (x_0+x_{2n+1},x_1,\ldots,x_{2n+1}),\\
\alpha_i&:& \bigl(x_0,\ldots,x_{2n+1})\mapsto (x_0+\la \alpha_i,\beta_i\ra x_{2n+1},x_1,x_2+\alpha_ix_{n+1},\ldots,\\
&&x_n+(p-1)x_{2n+1}, x_{n+1},\ldots,x_{2n+1}\bigr),\\
\beta_i&: &\bigl(x_0,\ldots,x_{2n+1})\mapsto (x_0+\la \beta_i,\alpha_i\ra x_{2n+1},x_1,x_2,\ldots,x_n,\\
&&x_{n+1},x_{n+2}+\beta_ix_{2n+1},x_{2n}+(p-1)\beta_ix_{2n+1}\bigr),
\end{eqnarray*}
where we identify the center $Z$ of $p_+^{1+2n}$ with $\bbF_p,$ so that  $c=1\in Z$, and the quotient by the center is $U\oplus V$. The extension is given by the symplectic form on $U\oplus V$. The variety $X$ is singular along codimension two subvariety $V(\sum_{i=0}^{2n}x_i,x_{2n+1})$  with multiplicity $p-1$. This implies that $X$ is a rational variety.

\vskip6pt
Note that the group 
$2_+^{1+6}$ acts faithfully on a del Pezzo surface of degree one in characteristic $2$ \cite{DolgMartinEven}, so 
$\cre_k(2_+^{1+2n}) = 2$, for $n\le 3$.

\begin{example} Assume $p = \cha(k)\ne 2, 3$. Consider the Segre cubic hypersurface $\mathcal{S}(n)$ given 
by the equations in $\bbP^{n+1}$:
$$\sum_{i=0}^{n+1}x_i^3 = \sum_{i=0}^{n+1}x_i = 0.$$
The symmetric group $\frakS_{n+2}$ acts faithfully by permuting the coordinates. 
 If $n = 2k$ is even and $\cha(k)\ne 2$, the cubic hypersurface has 
$\binom{n+1}{[\frac{n}{2}]}$ nodes (it is known to be the maximal possible number of isolated singular points 
for a cubic hypersurface in $\bbP^{2k}$ in characteristic 0). 
As we mentioned earlier,  a singular cubic hypersurface is rational (via the projection from a singular point).
This confirms the fact that $\cre_k(\frakS_{n+2})\le n-1$. 
Assume that $p\mid (n+1)$. The point $P = (1:1:\ldots:1)$ is an additional 
singular point, which is fixed under the action of $\frakS_{n+2}$. The group acts faithfully 
on the tangent cone at $P$ of dimension $n-2$. This defines a faithful  action of $\frakS_{n+2}$ on a quadric 
of dimension $n-2$ and shows that 
\beq\label{a7charp}
 \cre_k(\frakS_{n+2})\le n-2
 \eeq
if $\cha(k)|(n+2), p\ne 2, 3$. The first possible case is $\frakS_{10}$ and $p = 5$. 

\end{example}

\begin{example} As we mentioned in Section 2, it is conjectured that $\ed_k(\frakS_n) \le  n-3$ if $\cha(k) = 0$. 
However, 
in \cite{ReichsteinSym}, the authors prove that $\ed_k(\frakS_n) \le n-4$ 
for certain special $n$ if $p = \cha(k)\mid n$. Their construction uses the action of the 
affine group $A = \mathrm{Aff}_1(k)$ on 
the affine $(n-2)$-dimensional quadric $Q^{n-2}$ given by equations
$$F_1 = \sum_{i=1}^{n}x_i^2= 0, \quad F_2 = \sum_{i=1}^{n}x_i= 0.$$
We assume that $n\ge 5, p \ne 2$, and $p\mid n$. 
The action is  
$$(x_1,\ldots,x_{n})\mapsto (ax_1+bx_{n},\ldots,ax_n+bx_{n}).$$
We have $F_1 \mapsto aF_1+bnx_{n}$ and $F_2\mapsto a^2F_2+2abF_1+b^2nx_{n}^2.$
Since $p\mid n$, the quadric is $A$-invariant. The action is obviously faithful and commutes with 
$\frakS_n$. It descends to the faithful 
action of $\frakS_{n}$ on the orbit space of dimension $n-4$. Since $x_1 = 0$ is a cross-section of the action with the stabilizer 
subgroup of a general point equal to $\bbG_m$, we find that the rational quotient $Q^{n-2}/A$ is birationally 
isomorphic to a quadric of dimension $n-4$ in $\bbP^{n-3}$. In particular, it is a rational variety. This gives
$$\cre_k(\frakS_n)\le n-4, \ n\ge 5.$$
For example, if  $n =  p = 5$, 
$\cre_k(\frakS_5) = 1$. The group $\frakS_5$ is isomorphic to $\PGL_2(\bbF_5)$. If $n = 6, p = 3$, we get $\cre_k(\frakS_6) \le 2$. In fact,  
$\frakS_6\cong \PGL_2(9)\cong \Sp_4(\bbF_2)$ \cite[page 4]{Atlas}, and  $\cre_k(\frakS_6) = 1$.

To compare this with the essential dimension, we use \cite{ReichsteinSym}, where it is proven that, under the additional assumption that 
$n$ can be written in the form $2^{m_1}+\cdots+2^{m_r}$ for some $m_1\ge \cdots m_r\ge 0$, 
$$\ed_k(\frakS_n) \le n-4.$$
This applies to the cases $(n,p)= (5,5), (6,3)$, and gives the equalities for the Cremona and essential dimensions in these cases.
\end{example}

If it is true that $\cre_k(\bbZ/p^n\bbZ) = n$, then  
$$\cre_k(\frakS_n) \ge \Big[\log_pn\Big], \quad \ed_k(\frakS_n) \ge \Big[\log_pn\Big]. $$ 

\vskip5pt
In a positive characteristic $p > 0$, a simple algebraic group of exceptional type may contain  
large finite simple groups \cite{Serre1}. For example, if $p = 7$, the adjoint group of type $E_6$ contains the Mathieu group 
$M_{22}$. We will see in the next section that $\cre_k(E_6) = 16$. This gives $\cre_k(M_{22}) \le 16$. 
Another example is the Janko groups $J_1$ and $J_2$, which are contained 
in a simple group of type $G_2$ in characteristic $p = 2$ and $p = 11$. We will show that $\cre_k(G_2) = 5$. This gives 
$\cre_k(J_1)\le 5$ and $\cre_k(J_2)\le 5$.

The classification of wild pseudo-reflection groups is also known (see \cite{Kemper} and references therein). 
Among them  are the reduction modulo  $3, 5$ or $7$  of the pseudo-reflection groups in characteristic zero with 
Nos. 23 -- 37, except groups Nos. 25, 26, 27. Note that not all of them have the polynomial ring of invariants 
(for example, the group $F_4$ with  No. 28). However, the field of invariants is always rational. One of 
the examples is the group 
$G = 3.\frakA_7\times2$ of degree $3$ in characteristic $5$. It is known that the group 
$\frakA_7$ is contained in $\PGL_3(k)$, so $\cre_k(\frakA_7)\le 2.$ The realization of $3.\frakA_7$ as a 
pseudo-reflection group 
in characteristic $p = 5$, shows that
$$\cre_k(3.\frakA_7\times 2) = 3$$
(it is not difficult to show that $\cre_k(3.\frakA_7) > 2$). Note that the reductions of the Weyl groups 
$W(E_6)$ (resp. $ W(E_7), W(E_8))$ are wild reflection groups in characteristics $5$ (resp. $3,5,7$).
I do not know the essential dimension 
of wild pseudo-reflection groups, except in a few cases. Unfortunately, the results of 
\cite{Bronson} and \cite{DuncanReichstein} do not apply to wild pseudo-reflection groups.

\section{Algebraic groups}
The notion of the essential dimension can be extended to any algebraic group. It is a special 
case of the essential dimension of any covariant functor 
$$\calF: \textrm{Fields}/k\to \textrm{Sets}$$
introduced by Merkurjev \cite{Merkuriev}. For any $L/k$ and $a\in \calF(L)$, we say that $\ed_k(a) =n$ if 
$n$ is the minimum of the transcendence degrees of subextensions $k \subset K \subset L$ 
such that $a$ is in the image of $\calF(K)\to \calF(L)$. The essential dimension $\ed_k(\calF)$ 
is the supremum of $\ed_k(a)$ for all $(L,a\in \calF(L))$. 

For example, taking the functor $X:K\mapsto X(K)$ of  points of an irreducible algebraic variety, we obtain 
that $\ed_k(X) = \dim_k(X)$. 

 The essential dimension  of an algebraic group $G$ is the essential dimension of the functor 
$K\to H^1(K,G)$, the set of $G$-torsors of $G$ over $K$.  
 Connected linear algebraic groups of essential dimension zero were classified by Grothendieck. Serre introduced such 
 groups earlier and referred to 
 them as \emph{special groups}. An algebraic group is special if its maximal connected semisimple subgroup is the product 
 of groups $\SL_n(k)$ or $\Sp_n(k)$.

Let $G$ be an algebraic group that acts rationally on an irreducible algebraic variety $X$. By a theorem of Rosenlicht, 
we can find an open subset $U\subset X$ such that the geometric quotient $U/G = Y$ exists.  
Suppose also that we can find $U$ such that $G$ acts freely on it.
Then, the projection $\pi:U\to Y$ is a principal $G$-bundle, or, in other terms, a torsor 
over $Y$ \cite[Proposition 0.9]{Mumford}.
 By the definition of the essential dimension,
$$\ed_k(G)\le \dim Y = \dim X-\dim G.$$ 
In fact, 
$$\ed_k(G) = d-\dim G,$$
where $d$ is the smallest dimension of the image of a rational map $\phi:Y\to Z$ such that there exists a commutative 
diagrm 
$$\xymatrix{U\ar[d]^{\pi}\ar[r]& V\ar[d]^{\pi'}\\
Y\ar[r]^\phi&Z,}
$$
where $\pi':V\to Z$ is a $G$-torsor. 
This makes clear why $\ed(\GL_n(k)) = 0$: one should consider the linear action of the group on the linear space of matrices by left multiplication. 

In the following, we assume that $G$ is a simple algebraic group and $\cha(k) = 0$. We have the isogenies
$G^{\textrm{sc}}\to G\to G^{\textrm{adj}}$, where $G^{\textrm{sc}}$ is simply connected and $G^{\textrm{adj}}$ is adjoint. The \emph{rank} of $G$ is the dimension of its maximal torus.

 The existence of a linear representation $\rho:G\to \GL(V)$ (as always assumed to be a rational representation) 
such that $G$ acts generically freely on $V$, i.e., acts freely on an open subset of $V$, gives an upper bound for 
$\ed_k(G)$. 

Recall that a finite abelian subgroup of an algebraic group is called \emph{toral} if it is contained in a maximal torus of the algebraic group. 
One can obtain a lower bound for $\ed_k(G)$ by using that, for any finite abelian non-toral subgroup $A$ of $G$ (i.e., 
not contained 
in a maximal torus of $G$),
\beq
\rank(A)\le \ed_k(G)
\eeq
\cite[Theorem 7.2]{ReichsteinYoussin}. An abelian toral subgroup $A$ has rank $\le \rank(G)$. A non-toral abelian 
subgroup may have a larger rank. 
It is known that, for any abelian non-toral $p$-group $A$ in a simply connected simple algebraic group, $p$ divides the order of the 
Weyl group of $G$ \cite[1.2.2]{Serre1}. Any subgroup of the fundamental group of $G$ is non-toral. 
Any simply connected $G$, 
not of types $A_n$ or $C_n$, contains abelian non-toral subgroups. A simple algebraic group $G$ satisfying $
G^{\adj} = G^{\textrm{sc}}$ must be of types $G_2,F_4$ or $E_8$. These groups  
 contain non-toral $2$-groups $2^3, 2^5, 2^9$, respectively \cite{Griess}. This gives a bound
$$\ed_k(G_2)\ge 3, \quad \ed_k(F_4) \ge 5, \quad \ed_k(E_8) \ge 9.$$
On the other hand, the group $E_6$ (adjoint or simply connected) contains a group $2^5$
(coming from a subgroup isomorphic to the exceptional group $F_4$).  
The group $E_7^{\adj}$ (resp. $E_7^{\sc}$ contains $2^8$ (resp. $2^7$). This gives
$$\ed_k(E_6) \ge 5, \quad \ed_k(E_7^{\adj}) \ge 8, \quad \ed_k(E_7^{\textrm{sc}}) \ge 7.$$

\vskip6pt
 Let us now discuss the Cremona dimension $\cre_k(G)$ 
of an affine algebraic $k$-group. It can be defined in a similar manner to that for a finite group $G$. 
However, my conjecture does not extend to this 
case. For example, the essential dimension of a special algebraic group is zero, but the Cremona dimension 
is obviously positive. 

Another, more interesting example, we use that $\ed_k(\PGL_n(k)) = 2$ for $n = 2, 3 , 6$ \cite{ReichsteinInf}.
 However, applying \eqref{demazure}, we see that
$\PGL_{n+1}(k)$ does not embed in $\Cr_\bbC(m)$ for any $m <  n$.  On the positive side, 
$\ed_k(\textrm{PO}_{n+1}(k))) = n$, and it is known that 
$\textrm{PO}_{n+1}(k)$ embeds in $\Cr_k(n-1)$ via a birational map from $\bbP^{n-1}$ to a quadric in $\bbP^{n}$.


A maximal torus $T$ of $G$ contains a finite abelian group $p^r$, where $p$ is any prime number and $r = 
\dim(T)$ is the rank of $G$.  Applying 
\eqref{kollar}, we obtain
\beq\label{demazure}
\rank(G) \le \cre_k(G).
\eeq
Connected algebraic groups of rank $n$ contained in $\Cr_k(n)$ were classified by Demazure \cite{Demazure}. 
His work extends Enriques' work in the case $n = 2$.

A well-known theorem of Weil asserts that a birational action of an algebraic group can be regularized 
(see the references in \cite{Blanc2}). This means that 
an algebraic subgroup $G$ of $\Cr_k(n)$ acts biregularly on a rational variety $X$. The group of automorphisms $\Aut(X)$
of a complex rational variety (or even rationally connected variety) is a linear algebraic group of dimension equal 
to the dimension of the linear space of regular vector fields. So, if $G$ embeds in $\Cr_\bbC(n)$, there exists 
a rational smooth algebraic variety $X$ of dimension $n$ such that $h^0(X,T_X) \ge \dim G$. 
Many such varieties are found among projective vector bundles. The classification of connected 
algebraic subgroups of $\Cr_\bbC(2)$ and $\Cr_\bbC(3)$ is known (see \cite{Blanc} and the references thereein).

A connected algebraic subgroup of $\Cr_k(n)$ is said to be of \emph{maximal rank} if it contains a torus 
$T$ of dimension $n$. 
It is known that all such tori are conjugate 
in $\Cr_k(n)$. The classification of semi-simple connected algebraic subgroups of $\Cr_K(n)$ that contain 
a maximal torus defined over a field $K$ is known \cite[\S 4]{Demazure}. Each such subgroup is 
is a maximal algebraic subgroup of $\Cr_K(n)$ and is isomorphic to the connected component 
of the identity of the automorphism group of the product of the projective spaces 
\beq
 (\bbP_K^{m_1})^{n_1}\times \cdots \times (\bbP_K^{m_r})^{n_r}, \quad 
m_1n_1+\cdots+m_rn_r = n.
\eeq

 \begin{example}\label{grassmann}  In this example, we will compute $\cre_\bbC(G)$ of any adjoint simple algebraic linear group $G$ 
 over $k$ of characteristic $0$.\footnote{As there is no common agreement on the definitions, we assume that 
 $G$ is connected, and we admit almost simple groups (e.g., $\SL_n(k)$) as simple groups.}

 Suppose $G$ acts regularly on a smooth irreducible projective variety $X$ of dimension $d$. 
If $G$ has a fixed point $x\in X$, then $G$ acts faithfully on the tangent space $T_x(X)$, and, since $G$ is simple and adjoint, 
it acts faithfully on the projective space of dimension $d-1$. Replacing $X$ by $\bbP^{d-1}$, we may assume that 
$X^G = \emptyset$. Then, $G$ acts faithfully on each orbit in $X$. Replacing $X$ by a closed orbit of smallest dimension, we may assume that 
$X$ is a homogeneous space over $G$. After resolving singularities of $X$, we may assume that $X$ is a smooth projective 
homogeneous space, a quotient $G/P$ by a parabolic subgroup $P$. Since we want to minimize the dimension of $X$, we may assume that 
$P$ is a maximal parabolic, so 
$$\cre_\bbC(G) = M:=\min_{\textrm{$P$ maximal parabolic}} \dim G/P.$$

 It is known that a maximal parabolic subgroup $P$ is determined by a simple root of its root system, 
 in other words, a node in its Dynkin diagram.
 The dimension of $G/P$ is equal to the number of positive roots, which are not supported in the complementary  
 set of nodes.
 
 Easy computations show that 
 $$M = l,\  2l-1,\  2l-1,\ 2l-2, \  16, \ 27, \ 57, \ 15, \ 5,$$
 if $G$ is of type $A_l,  B_l , C_l,  D_l (l\ge 4),  E_6,  E_7,  E_8,  F_4,  G_2$, respectively. 
 The classical groups corresponding to $A, B, C, D$ series are
 $\mathrm{PGL}_{l+1}, \mathrm{PO}_{2l+1} , \PSp_{2l}$, and $\mathrm{PO}_{2l}$ (we also use that 
 $\mathrm{PSO}_4\cong \mathrm{PSO}_5$ and $\mathrm{PSO}_6\cong \PSp_4$).  
 The homogeneous varieties corresponding to classical groups are projective spaces, or
  quadrics. The homogeneous varieties $X$ corresponding 
 to the exceptional groups are the $E_6$-variety in $\bbP^{26}$ equal to the 
 singular locus of the Cartan cubic hypersurface, the $E_7$-variety in $\bbP^{55}$ equal 
 to the singular locus of the subvariety $\textrm{Sing}(X)$ of singular points of the Cartan quartic hypersurface \cite{Kempf}, 
 the  minimal adjoint orbit of the group $E_8$ in $\bbP^{247}$, the minimal adjoint orbit of $F_4$ in $\bbP^{51}$ or 
 a linear section of the $E_6$-variety by a $F_4$-invariant hyperplane, or 
 the minimal adjoint orbit of $G_2$ in $\bbP^{13}$. It is a quartic hypersurface in  
 a linear section 
 of the $E_6$-variety by a $G_2$-invariant 
 subspace $\bbP^6$ of $\bbP^{26}$ (see, 
 for example, \cite[\S 6]{Landsberg}). 
 
For example, besides the products of projective general groups, 
$\Cr_\bbC(2)$ contains none of the other simple algebraic groups. However, $\Cr_\bbC(3)$ contains $\PO_5\cong \PSp_4$. 
The group 
$\Cr_\bbC(4)$ contains $\PO_6$. The first case, 
where a simple algebraic group $G$ of one of the exceptional types appears, is $n = 5$ and $G \cong G_2$.

 \end{example}
 
\begin{example}
We can use \eqref{obvious} to get an upper bound for the Cremona dimension of a finite simple group. For example, 
it is known that $L_2(13)$ is contained in a simple group $G_2$. This gives 
$$4\le \cre_\bbC(L_2(13)) \le 5.$$ 
The minimal dimension of an irreducible representation of $L_2(13)$ is equal to $7$. 
Another example is the group $L_3(8)$, which is contained in a simple adjoint group $E_6$. We have
$$\cre_\bbC(L_3(8))\le 16.$$
It is known that the minimal dimension of an irreducible representation of this group is equal to $72$.
\end{example}

\section{Lattices in simple Lie groups}
\vskip6pt
Although the essential dimension of a discrete subgroup of a Lie group is not defined, 
I will briefly discuss the Cremona dimension of such a group. 

Let $\Gamma$ be a lattice in a simple real Lie group $G$ (in our definition, this means that $G$ is 
connected and its Lie algebra is 
simple). 
Let $d(\Gamma)$ be the smallest dimension of a faithful linear representation of $\Gamma$. It is known to be 
strictly larger than $\rank_\bbR(G)$ if $\rank_\bbR(G) \ge 2$. Suppose that $\Gamma$ embeds in the 
group $\Bir(X)$ of birational automorphisms 
of a complex algebraic variety $X$. It is conjectured that 
$$d(\Gamma) \le \dim X-1.$$
In particular, if $\rank_\bbR(G) \ge \dim X$, then $\Gamma$ is not isomorphic to a subgroup of $\Bir(X)$.
This is known to be true for non-cocompact lattices. 
Moreover, if the equality takes place, then $G$ is isogenous to $\SL_{\dim X+1}(\bbR)$ \cite{CantatXie}.

For example, $\Gamma = \SL_{n+1}(\bbZ)\subset \SL_{n+1}(\bbR)$ embeds in $\PGL_{n+1}(\bbC)$ for $n$ odd 
and, hence, embeds in $\Cr_\bbC(n)$. For even $n$, it can be embedded in $\Cr_\bbC(n+1)$ 
as a subgroup of monomial birational transformations of $\mathbb{A}_\bbC^{n+1}$. However, $\Gamma$ cannot be 
embedded in $\Cr_{\bbC}(n-1)$ for even or odd $n$.

The group $\Or(1,n)$ of orthogonal transformations of the hyperbolic space $\bbR^{1,n}$ is of the real rank $1$. 
It is known that for some $n$ (e.g. $n = 9$), it contains non-cocompact lattices embedded in $\Cr_\bbC(2)$.
A notorious example is the group of automorphisms of a general rational Coble surface isomorphic to such a lattice 
in $\Or(1,9)$.


\begin{thebibliography}{20}

\bibitem{BeauvilleCoble} A.~Beauville,  \textit{The Coble hypersurfaces}.
C. R. Math. Acad. Sci. Paris 337 (2003), no. 3, 189–194.

\bibitem{Beauville} A.~Beauville, 
\textit{Finite simple groups of small essential dimension}. Trends in contemporary mathematics, 221–228.
Springer INdAM Ser., {\bf 8}
Springer, Cham, 2014.

\bibitem{Beauville2} A.~Beauville, \textit{ Non-rationality of the symmetric 
sextic Fano threefold}.
EMS Ser. Congr. Rep.
European Mathematical Society (EMS), Zürich, 2012, 57-60.

\bibitem{Blanc} J.~Blanc, I.~Cheltsov, A.~Duncan, and Y.~ Prokhorov,  
\textit{Finite quasisimple groups acting on rationally connected threefolds}.
Math. Proc. Cambridge Philos. Soc. {\bf 174} (2023),  531-568.

\bibitem{Blanc2} J.~Blanc, A.~Fanelli, and  R.~Terpereau, \textit{Connected algebraic groups acting on 
three-dimensional Mori fibrations}.
Int. Math. Res. Not. IMRN 2023, no. 2, 1572-1689.

\bibitem{Buhler} J.~Buhler, and Z.~ Reichstein,
 \textit{On the essential dimension of a finite group.}
Compositio Math. {\bf 106} (1997), no. 2, 159-179.

\bibitem{Coxeter} H. S. M. Coxeter, \textit{Regular polytopes}, Dover Publications, Inc., New York, 1973.

Cambridge University Press, Cambridge, 1991, xiv+210 pp.

\bibitem{Atlas} R.~Curtis, S.~ Norton, R.~Parker, and R.~ Wilson, \textit{Atlas of finite groups. 
Maximal subgroups and ordinary characters for simple groups. With computational 
assistance from J. G. Thackray}. Oxford University Press, Eynsham (1985).

\bibitem{Bronson} P.~Brosnan, Z.~Reichstein, and A.~Vistoli, \textit{Essential dimension in mixed characteristic}.
Doc. Math. {\bf 23} (2018), 1587-1600.

\bibitem{Cantat} S.~Cantat, \textit{The Cremona group},
Proc. Sympos. Pure Math., {\bf 97}, vol. 1,
American Mathematical Society, Providence, RI, 2018. 


\bibitem{CantatXie} S.~Cantat,  and J.~Xie,
\textit{Algebraic actions of discrete groups: the p-adic method}.
Acta Math. 220 (2018), 239–-295.

\bibitem{CantatBourbaki} S.~Cantat, \textit{Progr\`es r\'ecents concernant le programme de Zimmer 
[d'après A. Brown, D. Fisher et S. Hurtado]}, Expos\'es Bourbaki,
Astérisque No. {\bf 414} (2019), Exp. No. 1136, 1–47.

\bibitem{Conway} J. Conway, and D.~Smith, \textit{On quaternions and octonions: their geometry, arithmetic, 
and symmetry}.
A K Peters, Ltd., Natick, MA, 2003,

\bibitem{Coray} D.~Coray, \textit{Algebraic points on cubic hypersurfaces}. Acta Arith. {\bf 30} (1976), 267-296.

\bibitem{Demazure} M.~Demazure, 
\textit{Sous-groupes algébriques de rang maximum du groupe de Cremona}.
Ann. Sci. École Norm. Sup. (4) {\bf 3} (1970), 507-–588.

\bibitem{Deserti} J.~D\'eserti, 
\textit{The embeddings of the Heisenberg group into the Cremona group}.
Glasg. Math. J. {\bf 64} (2022),  243-251.

\bibitem{DO} I.~Dolgachev, and D.~Ortland, \text{Point sets in projective spaces and theta functions}.
 Ast\'erisque, {\bf 165} (1988).

\bibitem{Dolgachev} I.~Dolgachev, \textit{On elements of order $p^s$ in the plane Cremona group over 
a field of characteristic  $p$}.  
Tr. Mat. Inst. Steklova 264 (2009), 55–62; translation in
Proc. Steklov Inst. Math. {\bf 264} (2009), no. 1, 48–55.

\bibitem{CAG} I.~Dolgachev, \textit{Classical algebraic geometry:a modern view}, Cambridge Univ. Press, 2012.

\bibitem{DolgIco} I.~Dolgachev, \textit{Quartic surfaces with icosahedral symmetry}.
Adv. Geom. {\bf 18} (2018),  119-132.

\bibitem{DolgMartinOdd} I.~Dolgachev, and G.~ Martin, \textit{Automorphisms of del Pezzo surfaces in odd characteristic},
J. Lond. Math. Soc. (2) {\bf 109} (2024), no. 5, Paper No. e12905, 40 pp.

\bibitem{DolgMartinEven} I.~Dolgachev, and G.~ Martin, \textit{Automorphisms of del Pezzo surfaces in characteristic 2}.
Algebra Number Theory {\bf 19} (2025),  715-761.

\bibitem{DolgKondo} I. Dolgachev, and S. Kond\={o}, Desmic surfaces in arbitrary characteristic, to appear in L'Enseignement Math.

\bibitem{Duncan} A.~Duncan, \textit{Finite groups of essential dimension 2}.
Comment. Math. Helv. {\bf 88} (2013), no. 3, 555-585.

\bibitem{Duncan2} A.~Duncan, \textit{Essential dimensions of $ A_7$  and  $S_7$}. 
Math. Res. Lett. {\bf 17} (2010), no. 2, 263–266.

\bibitem{DuncanReichstein} A.~Duncan, and Z.~Reichstein, 
\textit{Pseudo-reflection groups and essential dimension}.
J. Lond. Math. Soc. (2) {\bf 90} (2014),  879–902.

\bibitem{DR2} A.~Duncan, and Z.~Reichstein, 
\textit{Versality of algebraic group actions and rational points on twisted varieties}.
J. Algebraic Geom. {\bf 24} (2015), 499-–530.

\bibitem{ReichsteinSym} O.~Edens, and Z. Reichstein, \textit{Essential dimension of symmetric groups in prime characteristic},
C. R. Math. Acad. Sci. Paris 362 (2024), 639-647.

\bibitem{Garuti} M.~Garuti, \textit{Linear systems attached to cyclic inertia},
Proc. Sympos. Pure Math., {\bf 70}
American Mathematical Society, Providence, RI, 2002, 377-386.

\bibitem{GR} P.~Gille, and Z.~Reichstein, \textit{A lower bound on the essential dimension of a connected linear group}.
Comment. Math. Helv. {\bf 84} (2009),  189-212.

\bibitem{Gorenstein} D.~Gorenstei, \textit{Finite simple groups}
Univ. Ser. Math.
Plenum Publishing Corp., New York, 1982, x+333 pp.

\bibitem{Griess} R.~Griess,
\textit{Elementary abelian p-subgroups of algebraic groups}.
Geom. Dedicata {\bf 39} (1991),  253–305.

\bibitem{Humphreys} J.~Humphreys, 
\textit{Reflection groups and Coxeter groups}. Cambridge Stud. Adv. Math., 29, 1990.
Cambridge University Press, Cambridge, 1990.

\bibitem{Karpenko} N.~Karpenko,  and A.~Merkurjev, \textit{Essential dimension of finite p-groups}. 
Invent. Math. {\bf 172}, (2008), 491–508.

\bibitem{Kemper} G. Kemper, and G.~Malle, \textit{The finite irreducible linear groups with polynomial ring of invariants},
Transform. Groups {\bf 2} (1997), no. 1, 57-89.

\bibitem{Kempf} G.~Kempf,
\textit{The singularities of some invariant hypersurfaces}. Proceedings of the conference on algebraic geometry (Berlin, 1985), 210–216.
Teubner-Texte Math., 92[Teubner Texts in Mathematics]
BSB B. G. Teubner Verlagsgesellschaft, Leipzig, 1986

\bibitem{Knight} H.~Knight, \textit{The essential  p -dimension 
of the split finite quasi-simple groups of classical Lie type}.
J. Algebra {\bf 620} (2023), 425-451.

\bibitem{Kollar} J.~Koll\`ar, and Z.~Zhuang, \text{Essential dimension of isogenies}, math.AG 9 April 2025.

\bibitem{Landsberg} J.~Landsberg, L.~ Manivel. \textit{On the projective geometry of rational homogeneous varieties}.
Comment. Math. Helv. {\bf 78} (2003),  65-–100.

\bibitem{Ledet} A.~Ledet, \textit{On the essential dimension of $p$-groups}, in: Galois Theory and Modular Forms, 
in: Developments in Mathematics, vol. 11, Kluwer Academic Publishers, Boston,
MA, USA, 2004, pp. 159-172.

\bibitem{Mumford} D.~Mumford, J.~Fogarty, and F.~Kirwan,
\textit{Geometric invariant theory}.
Third edition
Ergeb. Math. Grenzgeb. 
Springer-Verlag, Berlin, 1994. 

\bibitem{Merkuriev} A.~Merkurjev, \textit{Essential dimension}. 
Bull. Amer. Math. Soc. {\bf 54} (2017), 635-661.





\bibitem{Prokhorov} Y.~Prokhorov, \textit{$p$-elementary subgroups of the space Cremona group of rank 3}.
EMS Ser. Congr. Rep. European Math. Soc. (EMS), Z\"urich, 2011,  327-338.

\bibitem{ProkhorovSym} Y.~Prokhorov, \textit{Embeddings of the symmetric groups to the space Cremona group}.
Kähler-Einstein metrics and degenerations, 749-762.
Springer Proc. Math. Stat., 409
Springer, Cham,  2023.

\bibitem{ProkhorovSimple} Y.~Prokhorov, \textit{Simple finite subgroups of the Cremona group of rank 3}.
J. Algebraic Geom. {\bf 21} (2012), no. 3, 563-600.


 
\bibitem{ReichsteinInf} Z.~Reichstein, \textit{On the notion of essential dimension for algebraic groups}.
Transform. Groups {\bf 5} (2000),  265–304.

\bibitem{ReichsteinYoussin} Z. ~Reichstein, and B.~ Youssin,
\textit{Essential dimensions of algebraic groups and a resolution theorem for G-varieties}.
With an appendix by J\'anos Koll\'ar and Endre Szab\'o
Canad. J. Math. {\bf 52} (2000), no. 5, 1018-1056.

\bibitem{Romagny} M.~ Romagny, \textit{Polynomials for addition in the Witt vectors}. Mathoverflow.net, 2012.

\bibitem{Serre1} J.-P. Serre, \textit{Sous-groupes finis des groupes de Lie}.
 S\'eminaire Bourbaki,
Astérisque No. {\bf 266} (2000), Exp. No. 864, 5, 415-430.

\bibitem{Serre} J.-P. Serre, \textit{Le groupe de Cremona et ses sous-groupes finis}. 
S\'eminaire Bourbaki,
Astérisque No. {\bf 332} (2010), Exp. No. 1000, vii, 75-100.

\bibitem{Tossici} D.~Tossici, and A.~Vistoli, \textit{On the essential dimension of infinitesimal group schemes},
Amer. J. Math. {\bf 135} (2013), no. 1, 103-114.

\bibitem{Winter} D.~Winter, \textit{The automorphism group of an extraspecial p-group}, Rocky Mointain Journal
 of Math., {\bf 2} (1972), 159-168. 
\end{thebibliography}
\end{document}